\newtheoremstyle{blockstyle} % name
{} % Space above
{0.5em} % Space below
{} % Body font
{\parindent} % Indent amount
{\bfseries} % Theorem head font
{.} % Punctuation after theorem head
{.5em} % Space after theorem head
{} %
\theoremstyle{blockstyle}
\newtheorem{theorem}{Theorem}
\newtheorem{lemma}{Lemma}
\newtheorem{definition}{Definition}
\newtheorem*{example}{Example}
\newtheorem{remark}{Remark}
\newtheorem{proposition}{Proposition}
\setlist{
label*={\arabic*.},
}
\setlist[1]{
wide=0pt, 
leftmargin=5pt, 
itemindent=\parindent, 
labelsep=5pt
}
\titlespacing*{\section}{\parindent}{10pt}{0pt}
\titlespacing*{\subsection}{\parindent}{10pt}{5pt}
\titlespacing*{\subsubsection}{\parindent}{10pt}{0pt}
\titleformat{\section}{\Large\bfseries}{\thesection.}{5pt}{}
\titleformat{\subsection}{\large\bfseries}{\thesubsection.}{5pt}{}
\titleformat{\subsubsection}{\normalsize\bfseries}{\thesubsubsection.}{5pt}{}
\renewenvironment{proof}{{\textit{Proof.}}}{\hfill $\square$}
\newcommand{\K}{\mathbb{K}}
\newcommand{\I}{\mathcal{I}}
\newcommand{\Z}{\mathbb{Z}}
\newcommand{\Sy}{\mathcal{S}}
\newcommand{\Zmap}{\mathcal{Z}}
\newcommand{\ksmall}{k}
\tikzset{line/.style={draw=black, thick}}
\tikzset{line_dashed/.style={draw=black, thick, dashed}}
\tikzset{orientation_line/.style={draw=gray, thin, ->}}
\tikzset{orientation_line_opp/.style={draw=gray, thin, <-}}
\begin{document}

\begin{center}
\textbf{{\large Symmetric Dijkgraaf -- Witten type invariants for 3-manifolds}}

Korablev Ph. G.\footnote{The research was supported by RSF (project No. 23-21-10014)}

\emph{korablev@csu.ru}
\end{center}

\begin{abstract}
In the paper we introduce the construction of invariants for 3-manifolds, based on the same key concepts as the classical Dijkgraaf -- Witten invariant. We introduce the notion of a special $G$-system and describe how each system induces the invariant not only for closed 3-manifolds, but also for manifolds with boundary. Finally, we show how to construct a very simple one-dimensional special $G$-system using group cohomologies.
\end{abstract}

\tableofcontents

\section{Introduction}

Dijkgraaf -- Witten invariants for 3-manifolds were introduced in \cite{DW}. Later it was reformulated in terms of triangulations of oriented 3-manifolds in \cite{W}. There are several approaches to generalising the construction of Dijkgraaf -- Witten invariants. For example, in \cite{K} these invariants are extended to the class of 3-manifolds with cusps. In \cite[Appendix H]{T} Dijkgraaf -- Witten invariants are extracted from a spherical fusions category.

In this paper we introduce so-called symmetric Dijkgraaf -- Witten type invariants for 3-manifolds. Instead of triangulations we use special and simple spines (see \cite{M}) of 3-manifolds. For each simple polyhedron we consider the set of colourings of this polyhedron by elements of the finite group $G$. If the polyhedron is a special spine of a closed manifold, then these colourings coincide with representations from the fundamental group of the manifold to the group $G$. In general, the set of colours is correctly defined for any simple polyhedron. As for the classical Dijkgraaf -- Witten invariants, we assign a value from a ring $\K$ to each colouring. The difference is that in the classical case the value of the invariant is a sum of the assigned values, but in our case the invariant is a multi-set of these values, because this multi-set does not depend on the choice of the simple spine of the manifold. That's why we call these invariants Dijkgraaf -- Witten type invariants.

The advantage of our construction is that it gives invariants not only for closed 3-manifolds, but also for manifolds with boundary and even for virtual manifolds (\cite{VM}). The approach we use is an extension of the approach for constructing configuration invariants (\cite{KC}).

The key concept of our construction is the $G$-system. Roughly speaking, the $G$-system is a collection of modules and tensors on these modules that satisfy some symmetric properties. If this system satisfies one condition, then it is called a special $G$-system and defines the invariant of simple polyhedrons modulo $T$-moves. If it additionally satisfies another condition, then it is called a strong special $G$-system and defines the invariant modulo $T,L$-moves. The most important step in constructing the invariant is to find a special (or strong special) $G$-system. We do not provide a universal method for constructing these systems. It will be interesting to find the way to extract these systems from categories of certain types.

In section 2 we introduce the notion of the $G$-system and prove that if this system is strong and special, then it defines the invariant of 3-manifolds. In Section 3 we consider in more detail the simplest 1-dimensional case. In this case, all modules in the $G$ system are one dimensional. We construct a chain (and corresponding cochain) complex and show that any element from the third cohomology group defines the special $G$ system. This construction is similar to the classical cohomology construction for groups, but slightly different. For example, the well-known cocycles for cyclic groups are not cocycles in our case. At the end of the section we show an example with a non-trivial cocycle for the group $\mathbb{Z}_4$ and calculate the value of the invariant for the lens space $L_{4, 1}$.

\section{Invariant $DW_{\mathcal{S}}$}

\subsection{Simple polyhedrons}

\begin{definition}
\label{Definition:SpecialSpine}
A two-dimensional polyhedron $P$ is called \emph{simple} if the link of every point $x\in P$ is homeomorphic to one of the following 1-dimensional polyhedra:
\begin{enumerate}
\item A circle (figure \ref{Figure:SpecialPolyhedron} on the left), in this case the point $x$ is called \emph{regular point};
\item A circle with one diameter (figure \ref{Figure:SpecialPolyhedron} in the centre), in this case the point $x$ is called \emph{triple point};
\item A circle with two diameters (figure \ref{Figure:SpecialPolyhedron} on the right), in this case the point $x$ is called \emph{true vertex}.
\end{enumerate}

If the union of regular points of a simple polyhedron is a disjoint union of open discs, and the union of triple points is a disjoint union of intervals, then this polyhedron is called \emph{special}.
\end{definition}

\begin{figure}[h]
\begin{center}
\ \hfill
\begin{tikzpicture}[scale=0.5, baseline={([yshift=-10.0ex]current bounding box.center)}]
	\draw[line] (0, 0) -- (2, 2) -- (5, 2) -- (3, 0) -- (0, 0);
	\draw[fill] (2.5, 1) node[below] {$x$} circle (0.075);
\end{tikzpicture}
\hfill
\begin{tikzpicture}[scale=0.5]
	\draw[line] (0, 0) -- (2, 2) -- (5, 2) -- (3, 0) -- (0, 0);
	\draw[line] (3.5, 2) -- (1.5, 0) -- (1.5, -2) -- (3.5, 0) -- (3.5, 0.5);
	\draw[line_dashed] (3.5, 0.5) -- (3.5, 2);
	\draw[fill] (2.5, 1) node[below right] {$x$} circle (0.075);
\end{tikzpicture}
\hfill
\begin{tikzpicture}[scale=0.5]
	\draw[line] (1, 1) -- (0, 0) -- (3, 0) -- (5, 2) -- (4, 2);
	\draw[line_dashed] (1, 1) -- (2, 2) -- (4, 2);
	\draw[line] (1, 1) -- (1, 3) -- (4, 3) -- (4, 1) -- (1, 1);
	\draw[line] (2.5, 1) -- (1.5, 0) -- (1.5, -2) -- (3.5, 0) -- (3.5, 0.5);
	\draw[line_dashed] (2.5, 1) -- (3.5, 2) -- (3.5, 0.5);
	\draw[fill] (2.5, 1) node[below right] {$x$} circle (0.075);
\end{tikzpicture}
\hfill\ \ 
\end{center}
\caption{\label{Figure:SpecialPolyhedron}Regular point (on the left), triple point (on the centre) and true vertex (on the right)}
\end{figure}
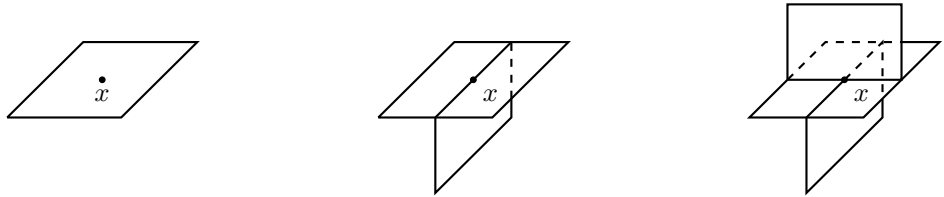

For each simple polyhedron $P$ denote $\mathcal{V}(P)$ the set of true vertices of $P$, $\mathcal{E}(P)$ the set of all triple lines (components of the union of all triple points), $\mathcal{C}(P)$ the set of all 2-components (components of the union of all regular points). If $P$ is special, then all elements of $\mathcal{C}(P)$ are discs and all components of $\mathcal{E}(P)$ are intervals. If $P$ is a simple, then the set $\mathcal{C}(P)$ can contain any surfaces.

\begin{definition}
\label{Definition:Spine}
Let $M$ be a 3-manifold. The polyhedron $P\subseteq M$ is called \emph{spine} of $M$ if the following holds:
\begin{enumerate}
\item In the case $\partial M = \emptyset$, the complement $M\setminus P$ is homeomorphic to an open 3-ball;
\item In the case $\partial M\neq \emptyset$, the complement $M\setminus P$ is homeomorphic to the direct product $\partial M\times [0; 1)$.
\end{enumerate}

The spine $P$ is called \emph{special spine} if the polyhedron $P$ is special, and it is called \emph{simple spine} if the polyhedron $P$ is simple.
\end{definition}

There are several types of local transformations of simple spines that do not change the corresponding manifold. We will need two transformations: $T$-move (figure \ref{Figure:TLMove} on the left) and $L$-move (figure \ref{Figure:TLMove} on the right).

\begin{figure}[h]
\begin{center}
\ \hfill
\begin{tikzpicture}[scale=0.4]
	\draw[line] (2, 7) -- (0, 7) -- (0, 0) -- (2, 0) -- (3.5, 2.5) -- (5, 0) -- (7, 0) -- (7, 7) -- (5, 7) -- (3.5, 4.5) -- cycle;
	\draw[line] (2, 7) arc (-180:0:1.5 and 0.75);
	\draw[line] (5, 7) arc (0:180:1.5 and 0.75);
	
	\draw[line] (2, 0) arc (-180:0:1.5 and 0.75);
	\draw[line_dashed] (5, 0) arc (0:180:1.5 and 0.75);
	
	\draw[line] (3.5, 2.5) -- (3.5, 4.5);
	
	\draw[line] (4, 7.7) -- (6, 9) -- (6, 7);
	\draw[line_dashed] (3.5, 4.5) -- (4, 7.7);
	\draw[line_dashed] (6, 7) -- (6, 2) -- (4, 0.7) -- (3.5, 2.5);
	
	\draw[fill] (3.5, 2.5) circle (0.075);
	\draw[fill] (3.5, 4.5) circle (0.075);
\end{tikzpicture}
\begin{tikzpicture}[scale=0.5, baseline={([yshift=-11.0ex]current bounding box.center)}]
	\draw[-latex] (0, 0.5) -- (2, 0.5) node[above, midway] {$T$};
	\draw[-latex] (2, -0.5) -- (0, -0.5) node[below, midway] {$T^{-1}$};
\end{tikzpicture}
\begin{tikzpicture}[scale=0.4]
	\draw[line] (2, 0) -- (2, 7) -- (0, 7) -- (0, 0) -- cycle;
	\draw[line] (7, 0) -- (7, 7) -- (5, 7) -- (5, 0) -- cycle;
	
	\draw[line] (2, 0) arc (-180:0:1.5 and 0.75);
	\draw[line] (2, 3.5) arc (-180:0:1.5 and 0.75);
	\draw[line] (2, 7) arc (-180:0:1.5 and 0.75);
	
	\draw[line_dashed] (5, 0) arc (0:180:1.5 and 0.75);
	\draw[line_dashed] (5, 3.5) arc (0:180:1.5 and 0.75);
	\draw[line] (5, 7) arc (0:180:1.5 and 0.75);
	
	\draw[line] (4, 7.7) -- (6, 9) -- (6, 7);
	\draw[line_dashed] (6, 7) -- (6, 2) -- (4, 0.7) -- (4, 7.7);
	
	\draw[fill] (2, 3.5) circle (0.075);
	\draw[fill] (5, 3.5) circle (0.075);
	\draw[fill] (4, 4.2) circle (0.075);
\end{tikzpicture}
\hfill
\begin{tikzpicture}[scale=0.4]
	\draw[line] (2, 7) -- (0, 7) -- (0, 0) -- (2, 0);
	\draw[line] (5, 7) -- (7, 7) -- (7, 0) -- (5, 0);
	
	\draw[line] (2, 0) arc (-180:0:1.5 and 0.75);
	\draw[line] (2, 7) arc (-180:0:1.5 and 0.75);
	
	\draw[line_dashed] (5, 0) arc (0:180:1.5 and 0.75);
	\draw[line] (5, 7) arc (0:180:1.5 and 0.75);
	
	\draw[line] (5, 0) arc (0:180:1.5);
	\draw[line] (2, 7) arc (-180:0:1.5);
\end{tikzpicture}
\begin{tikzpicture}[scale=0.5, baseline={([yshift=-11.0ex]current bounding box.center)}]
	\draw[-latex] (0, 0.5) -- (2, 0.5) node[above, midway] {$L$};
	\draw[-latex] (2, -0.5) -- (0, -0.5) node[below, midway] {$L^{-1}$};
\end{tikzpicture}
\begin{tikzpicture}[scale=0.4]
	\draw[line] (2, 0) -- (2, 7) -- (0, 7) -- (0, 0) -- cycle;
	\draw[line] (7, 0) -- (7, 7) -- (5, 7) -- (5, 0) -- cycle;
	
	\draw[line] (2, 0) arc (-180:0:1.5 and 0.75);
	\draw[line] (2, 3.5) arc (-180:0:1.5 and 0.75);
	\draw[line] (2, 7) arc (-180:0:1.5 and 0.75);
	
	\draw[line_dashed] (5, 0) arc (0:180:1.5 and 0.75);
	\draw[line_dashed] (5, 3.5) arc (0:180:1.5 and 0.75);
	\draw[line] (5, 7) arc (0:180:1.5 and 0.75);
	
	\draw[fill] (2, 3.5) circle (0.075);
	\draw[fill] (5, 3.5) circle (0.075);
\end{tikzpicture}
\hfill\ \ 
\end{center}
\caption{\label{Figure:TLMove}$T^{\pm 1}$-move (on the left) and $L^{\pm 1}$-move (on the right)}
\end{figure}
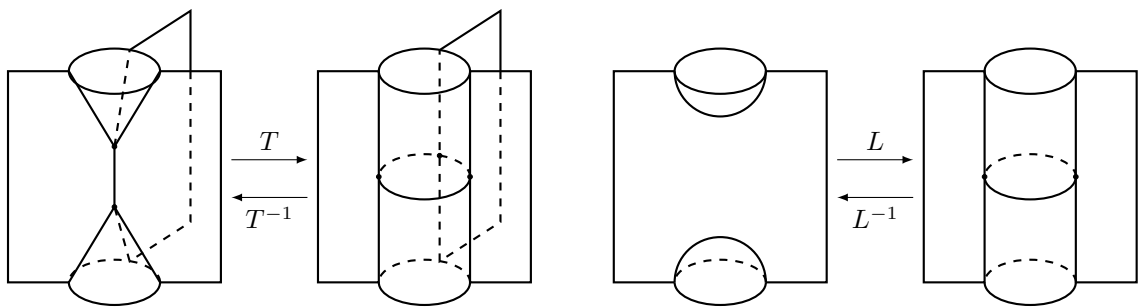

It is known that any two simple spines $P_1, P_2$ of the manifold $M$ can be obtained from each other by a finite sequence of $T$-moves, $L$-moves and  their inverse (\cite[Theorem 1.2.27]{M}). Furthermore, if both $P_1$ and $P_2$ are special and contain at least two true vertices, then it's sufficient to use only $T$-moves (\cite[Theorem 1.2.5]{M}) to transform $P_1$ into $P_2$.

\subsection{Colourations}

By an oriented simple polyhedron we mean a polyhedron with fixed orientations of all triple lines and 2-components.

\begin{definition}
\label{Definition:Colouring}
Let $M$ be an oriented 3-manifold, $P\subseteq M$ be a simple oriented spine, and $G$ be a finite group. The map $\xi\colon \mathcal{C}(P)\to G$ is called \emph{colouring} of $P$ if for every triple line $e\in \mathcal{E}(P)$: $$g_{i_1}^{\varepsilon_1}g_{i_2}^{\varepsilon_2}g_{i_3}^{\varepsilon_3} = 1,$$ where $g_{i_1}, g_{i_2}, g_{i_3}\in G$ are colours of 2-components incident to $e$, and $\varepsilon_j = 1$ if the orientation of the 2-component with the colour $g_{i_j}$ coincides with the orientation of $e$ and $\varepsilon_j = -1$ in the opposite case (figure \ref{Figure:Colouring}). The sequence of group elements $g_{i_1}, g_{i_2}, g_{i_3}$ is written with respect to the orientation of $e$ and the orientation of the whole manifold $M$.
\end{definition}

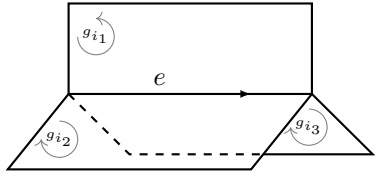
\begin{figure}[h]
\begin{center}
\begin{tikzpicture}[scale=0.8]
	\draw[line] (0, 0) -- (0, 1.5) -- (4, 1.5) -- (4, 0) -- cycle;
	\draw[line] (0, 0) -- (-1, -1.25) -- (3, -1.25) -- (4, 0);
	\draw[line_dashed] (0, 0) -- (1, -1) -- (3.2, -1);
	\draw[line] (3.2, -1) -- (5, -1) -- (4, 0);
	
	\draw[orientation_line] (0.15, 0.95) arc (-180:90:0.3);
	\draw (0.45, 0.95) node {{\tiny $g_{i_1}$}};
	
	\draw[orientation_line_opp] (-0.45, -0.75) arc (-180:90:0.3);
	\draw (-0.15, -0.75) node {{\tiny $g_{i_2}$}};
	
	\draw[orientation_line_opp] (3.65, -0.55) arc (-180:90:0.3);
	\draw (3.95, -0.55) node {{\tiny $g_{i_3}$}};
	
	\draw[-latex] (0, 0) -- (3, 0) node[above, midway] {$e$};
\end{tikzpicture}
\end{center}
\caption{\label{Figure:Colouring}Colours of 2-components in the neighbourhood of the edge $e$}
\end{figure}

Let $Col_{G}(P)$ be the set of all colourings of the simple polyhedron $P$ by elements of the group $G$. It's clear that the set $Col_{G}(P)$ is finite.

\begin{theorem}
\label{Theorem:ColouringCount}
Let $M$ be an oriented 3-manifold, $G$ be a finite group. Then for any two simple spines $P_1, P_2$ of the manifold $M$: $$|Col_{G}(P_1)| = |Col_G(P_2)|.$$
\end{theorem}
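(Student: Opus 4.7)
The plan is to appeal to the Matveev theorem recalled just above: any two simple spines of $M$ are related by a finite sequence of $T^{\pm 1}$- and $L^{\pm 1}$-moves. It therefore suffices to verify that each individual move preserves $|Col_G(\cdot)|$, after which the claim follows by induction on the length of the connecting sequence.

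The key observation is that the defining relation $g_{i_1}^{\varepsilon_1}g_{i_2}^{\varepsilon_2}g_{i_3}^{\varepsilon_3} = 1$ is imposed independently at each triple line. If $U \subseteq P$ denotes a small regular neighborhood of the portion of $P$ affected by a single move, and $U' \subseteq P'$ the corresponding region in the resulting spine, then the 2-components and triple lines of $P$ lying outside $U$ are in a canonical bijection with those of $P'$ lying outside $U'$. A coloring of $P$ thus decomposes as a pair (coloring of $P \setminus U$, compatible extension over $U$), and analogously for $P'$. Hence the theorem reduces to checking that, for every fixed ``boundary data'' (\emph{i.e.}, colors of those 2-components of $P \setminus U$ that meet $\partial U$), the number of extensions over $U$ equals the number of extensions over $U'$.

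For the $T$-move one inspects Figure \ref{Figure:TLMove}: on the ``before'' side, $U$ contains two true vertices joined by a single triple line, and the colors of the interior 2-components are determined from the boundary colors by a short chain of triple-line relations, leaving exactly one residual equation in the boundary colors; on the ``after'' side, $U'$ contains three true vertices and an additional internal 2-component whose color is likewise forced, and the resulting residual equation on the boundary colors coincides with the one on the other side. Thus the local count is either $0$ on both sides or $1$ on both sides. The $L$-move is treated analogously: the extra internal 2-component it introduces carries an \emph{a priori} free color, but this color is immediately pinned down by a new triple-line relation, so the number of extensions again matches.

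The main obstacle I expect is the careful bookkeeping of orientations, since for each triple line appearing locally one must determine the exponents $\varepsilon_j \in \{\pm 1\}$ from the chosen orientations of the incident 2-components and of the edge itself. Once this sign accounting is performed consistently, the verification reduces to a finite, purely mechanical computation in the group $G$, and the equality $|Col_G(P_1)| = |Col_G(P_2)|$ follows.
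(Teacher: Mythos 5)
Your proposal follows the same route as the paper: reduce to Matveev's $T$- and $L$-move calculus and verify a local bijection between colourings before and after each move (the paper carries out the group computations you defer, showing e.g.\ that the new 2-component created by a $T$-move has a uniquely forced colour $w = z_1^{-1}y_1^{-1}$ and that the residual relation $x_1 x_2 x_3 = 1$ is recovered on the other side). The one point you leave implicit that the paper treats as a separate first step is that $|Col_{G}(P)|$ is also independent of the chosen orientations of the 2-components and triple lines of a fixed spine, which is needed because colourings are defined relative to an orientation of $P$ and the moves are compared with particular orientations fixed.
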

\begin{remark}
If $M$ is closed, then the statement of the theorem is obvious. Indeed, in this case for $i = 1, 2$ the set $Col_{G}(P_i)$ is just a set of homomorphisms from $\pi_1(P_i)$ to $G$. But in the more general case, when the manifold $M$ is not closed, this simple argument does not work.
\end{remark}
\begin{proof}
We are going to start by proving that the cardinality of the set $Col_{G}(P)$ does not depend on the orientation of $P$.

Let $P_2$ be obtained from $P_1$ by changing the orientation of a 2-component $c\in \mathcal{C}(P_1)$. Then for each colouring $\xi\colon\mathcal{C}(P_1)\to G$ there exists a corresponding colouring $\xi'\colon\mathcal{C}(P_2)\to G$, defined by the following rule: $\xi(c) = \xi'(c)^{-1}$, and $\xi(f) = \xi'(f)$ for all $f\neq c$, $f\in \mathcal{C}(P_1), \mathcal{C}(P_2)$.

Next consider the case where $P_2$ is obtained from $P_1$ by changing the orientation of a triple line $e\in\mathcal{E}(P_1)$. If $\xi\colon \mathcal{C}(P_1)\to G$ is a colouring, then the same map defines the colouring of the polyhedron $P_2$. It follows from the fact that $g_{i_1}^{\varepsilon_1} g_{i_2}^{\varepsilon_2} g_{i_3}^{\varepsilon_3} = 1$ is equivalent to $g_{i_3}^{- \varepsilon_3} g_{i_2}^{-\varepsilon_2} g_{i_1}^{-\varepsilon_1} = 1$ in $G$.

It's known that any two simple spines of the manifold $M$ can be obtained from each other by a finite sequence of $T$-moves, $L$-moves and their inverse (\cite[Theorem 1.2.27]{M}).

Let $P_2$ be obtained from $P_1$ by a $T$-move. Fix the orientations of $P_1$ and $P_2$ as shown in the figure \ref{Figure:TMoveColourings}. Let $\xi\colon \mathcal{C}(P_1)\to G$ be a colouring as shown in the figure \ref{Figure:TMoveColourings} on the left. Then in particular
\begin{center}
\begin{tabular}{lll}
$x_1 y_3^{-1} y_2 = 1$, & $x_2 y_1^{-1} y_3 = 1$, & $x_3 y_2^{-1} y_1 = 1$, \\
$x_1 z_3 z_2^{-1} = 1$, & $x_2 z_1 z_3^{-1} = 1$, & $x_3 z_2 z_1^{-1} = 1$, \\
 & $x_1 x_2 x_3 = 1$. &
\end{tabular}
\end{center}

It follows that $y_1^{-1}y_3 = x_2^{-1}$, $y_1^{-1} y_2 = x_3$, $x_2 = z_3 z_1^{-1}$ and $x_3 = x_1 x_1^{-1}$.

Define the corresponding colouring $\xi'\colon\mathcal{C}(P_2)\to G$ as follows. All 2-components are orientated and coloured by the same colours, and the new 2-component is orientated as shown in the figure \ref{Figure:TMoveColourings} on the right and coloured by the element $w\in G$. We should prove that there is a unique colour $w$ such that the following equations hold
\begin{center}
\begin{tabular}{lll}
$x_1 y_3^{-1} y_2 = 1$, & $x_2 y_1^{-1} y_3 = 1$, & $x_3 y_2^{-1} y_1 = 1$, \\
$x_1 z_3 z_2^{-1} = 1$, & $x_2 z_1 z_3^{-1} = 1$, & $x_3 z_2 z_1^{-1} = 1$, \\
$w y_3 z_3 = 1$, & $w y_1 z_1 = 1$, & $w y_2 z_2 = 1$.
\end{tabular}
\end{center}

The first six equations are the same as for $\xi$, so it holds. Define $w = z_1^{-1} y_1^{-1}$. Note that $z_1^{-1}y_1^{-1}y_3 z_3 = z_1^{-1} x_2^{-1} z_3 = 1$ and $z_1^{-1} y_1^{-1} y_2 z_2 = z_1^{-1} x_3 z_2 = 1$. So the colouring $\xi'$ is correctly defined.

To prove that the colouring $\xi$ is also uniquely defined from $\xi'$, note that from $x_1 = y_2^{-1} y_3$, $x_2 = y_3^{-1} y_1$ and $x_3 = y_1^{-1} y_2$ follows that $x_1 x_2 x_3 = 1$.

\begin{figure}[h]
\begin{center}
\ \hfill
\begin{tikzpicture}[scale=0.4]
 	%\draw[step=1.0,gray,thin] (-1, -1) grid (10, 10);

	\draw[line] (2, 7) -- (0, 7) -- (0, 0) -- (2, 0) -- (4.5, 2.5) -- (7, 0) -- (9, 0) -- (9, 7) -- (7, 7) -- (4.5, 4.5) -- cycle;
	\draw[line] (2, 7) arc (-180:0:2.5 and 0.75);
	\draw[line] (7, 7) arc (0:180:2.5 and 0.75);
	
	\draw[line] (2, 0) arc (-180:0:2.5 and 0.75);
	\draw[line_dashed] (7, 0) arc (0:180:2.5 and 0.75);
	
	\draw[line] (4.5, 2.5) -- (4.5, 4.5);
	
	\draw[line] (5, 7.73) -- (7.5, 9) -- (7.5, 7);
	% \draw[line_dashed] (4.5, 4.5) -- (5, 7.73);
	\draw[line_dashed] (7.5, 7) -- (7.5, 2) -- (5, 0.73);% -- (4.5, 2.5);
	
	\draw[fill] (4.5, 2.5) circle (0.075);
	\draw[fill] (4.5, 4.5) circle (0.075);
	
 	\draw[-latex, thick] (4.5, 3) -- (4.5, 4);
 	\draw[-latex, thick] (4, 5) -- (3, 6);
 	\draw[-latex, thick] (5, 5) -- (6, 6);
 	\draw[-latex, thick] (3, 1) -- (4, 2);
 	\draw[-latex, thick] (6, 1) -- (5, 2);
 	\draw[-latex, thick, dashed] (4.5, 4.5) -- (5, 7.73);
 	\draw[-latex, thick, dashed] (5, 0.73) -- (4.5, 2.5);
	
	\draw[orientation_line] (0.5, 6) arc (-180:90:0.5);
	\draw (1.0, 6.0) node {{\tiny $x_1$}};
	
	\draw[orientation_line_opp] (7.5, 1) arc (-180:90:0.5);
	\draw (8.0, 1.0) node {{\tiny $x_2$}};
	
	\draw[orientation_line_opp] (6.3, 8) arc (-180:90:0.5);
	\draw (6.8, 8.0) node {{\tiny $x_3$}};
	
	\draw[orientation_line_opp] (5.2, 7.0) arc (-180:90:0.5);
	\draw (5.7, 7.0) node {{\tiny $y_1$}};
	
	\draw[orientation_line_opp] (3.0, 7.0) arc (-180:90:0.5);
	\draw (3.5, 7.0) node {{\tiny $y_2$}};
	
	\draw[orientation_line] (3.9, 5.5) arc (-180:90:0.5);
	\draw (4.4, 5.5) node {{\tiny $y_3$}};
	
	\draw[orientation_line] (3.8, 1.5) arc (-180:90:0.4);
	\draw (4.2, 1.5) node {{\tiny $z_2$}};
	
	\draw[orientation_line_opp] (4.0, -0.15) arc (-180:90:0.5);
	\draw (4.5, -0.15) node {{\tiny $z_3$}};
	
	\draw[orientation_line] (5.0, 1.15) arc (-180:90:0.3);
	\draw (5.3, 1.15) node {{\tiny $z_1$}};
\end{tikzpicture}
\hfill
\begin{tikzpicture}[scale=0.4]
%	\draw[step=1.0,gray,thin] (-1, -1) grid (10, 10);
	
	\draw[line] (0, 0) -- (2, 0) -- (2, 7) -- (0, 7) -- cycle;
	\draw[line] (7, 0) -- (9, 0) -- (9, 7) -- (7, 7) -- cycle;
	\draw[line] (5, 7.73) -- (7.5, 9) -- (7.5, 7);
	\draw[line_dashed] (7.5, 7) -- (7.5, 2) -- (5, 0.73);
	\draw[-latex, thick, dashed] (5, 0.73) -- (5, 4.25);
	\draw[-latex, thick, dashed] (5, 4.25) -- (5, 7.73);
	
	\draw[line] (2, 7) arc (-180:0:2.5 and 0.75);
	\draw[line] (7, 7) arc (0:180:2.5 and 0.75);
	
	\draw[line] (2, 0) arc (-180:0:2.5 and 0.75);
	\draw[line_dashed] (7, 0) arc (0:180:2.5 and 0.75);
	\draw[line] (2, 3.5) arc (-180:0:2.5 and 0.75);
	% \draw[line_dashed] (7, 3.5) arc (0:180:2.5 and 0.75);
	\draw[line_dashed, -latex] (7, 3.5) arc (0:78:2.5 and 0.75);
	\draw[line_dashed, -latex] (5, 4.25) arc (90:125:3.0 and 0.55);
	\draw[line_dashed] (2, 3.5) arc (180:115:2.5 and 0.75);
	
	\draw[fill] (2, 3.5) circle (0.075);
	\draw[fill] (7, 3.5) circle (0.075);
	\draw[fill] (5, 4.25) circle (0.075);
	
	\draw[-latex, thick] (2, 1) -- (2, 3);
	\draw[-latex, thick] (2, 4) -- (2, 6);
	\draw[-latex, thick] (7, 1) -- (7, 3);
	\draw[-latex, thick] (7, 4) -- (7, 6);
	\draw[-latex, thick] (4.5, 2.75) arc (-90:-45:2.5 and 0.75);
	
	\draw[orientation_line] (0.5, 6) arc (-180:90:0.5);
	\draw (1.0, 6.0) node {{\tiny $x_1$}};
	
	\draw[orientation_line_opp] (7.5, 1) arc (-180:90:0.5);
	\draw (8.0, 1.0) node {{\tiny $x_2$}};
	
	\draw[orientation_line_opp] (6.3, 8) arc (-180:90:0.5);
	\draw (6.8, 8.0) node {{\tiny $x_3$}};
	
	\draw[orientation_line_opp] (5.2, 7.0) arc (-180:90:0.5);
	\draw (5.7, 7.0) node {{\tiny $y_1$}};
	
	\draw[orientation_line_opp] (3.0, 7.0) arc (-180:90:0.5);
	\draw (3.5, 7.0) node {{\tiny $y_2$}};
	
	\draw[orientation_line] (3.9, 5.5) arc (-180:90:0.5);
	\draw (4.4, 5.5) node {{\tiny $y_3$}};
	
	\draw[orientation_line] (2.5, 1.5) arc (-180:90:0.5);
	\draw (3, 1.5) node {{\tiny $z_2$}};
	
	\draw[orientation_line_opp] (4.0, -0.15) arc (-180:90:0.5);
	\draw (4.5, -0.15) node {{\tiny $z_3$}};
	
	\draw[orientation_line] (5.5, 1.7) arc (-180:90:0.5);
	\draw (6, 1.7) node {{\tiny $z_1$}};
	
	\draw[orientation_line] (3.7, 3.5) arc (-180:90:0.5);
	\draw (4.2, 3.5) node {{\tiny $w$}};
\end{tikzpicture}
\hfill\ \ 
\end{center}
\caption{\label{Figure:TMoveColourings}Coloured oriented polyhedrons before (on the left) and after (on the right) $T$-move}
\end{figure}
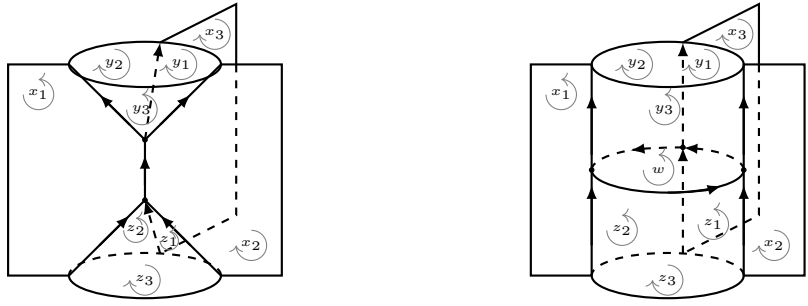

Let $P_2$ be obtained from $P_1$ by a $L$-move, and let $\xi\colon \mathcal{C}(P_1)\to G$ be the colouring of $P_1$ shown in the figure \ref{Figure:LMoveColouring}. It follows that $ab_1 c_1 = 1$ and $a b_2 c_2 = 1$. Define the colouring $\xi'\colon\mathcal{C}(P_2)\to G$ as shown in the figure \ref{Figure:LMoveColouring}. We should prove that $a' = a$ and there is a unique colour $w\in G$ such that
\begin{center}
\begin{tabular}{ll}
$ab_1 c_1 = 1$ & $a b_2 c_2 = 1$ \\
$c_2 c_1^{-1} w = 1$ & $b_2^{-1} b_1 w = 1$ \\
$a' b_2 c_ 2 = 1$ & $a' b_1 c_1 = 1$
\end{tabular}
\end{center}

Note that $a' = (b_2 c_2)^{-1} = (a^{-1})^{-1} = a$. Choose $w = c_1 c_2^{-1}$. Then $b_2^{-1} b_1 c_1 c_2^{-1} = b_2^{-1} a^{-1} c_2^{-1} = 1$.

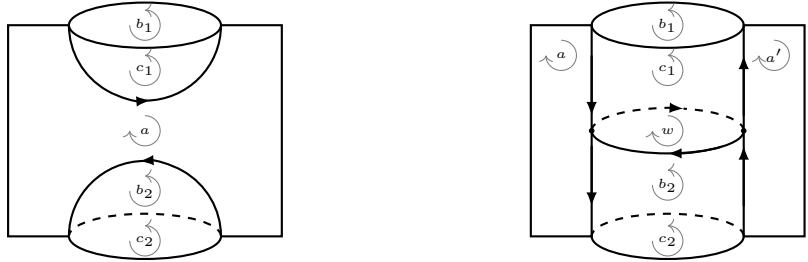
\begin{figure}[h]
\begin{center}
\ \hfill
\begin{tikzpicture}[scale=0.4]
% 	\draw[step=1.0,gray,thin] (-1, -1) grid (10, 10);

	\draw[line] (2, 0) -- (0, 0) -- (0, 7) -- (2, 7);
	\draw[line] (7, 0) -- (9, 0) -- (9, 7) -- (7, 7);
	\draw[line] (2, 7) arc (-180:0:2.5 and 0.75);
	\draw[line] (7, 7) arc (0:180:2.5 and 0.75);
	
	\draw[line] (2, 0) arc (-180:0:2.5 and 0.75);
	\draw[line_dashed] (7, 0) arc (0:180:2.5 and 0.75);
	
	\draw[line] (2, 0) arc (180:90: 2.5);
	\draw[line, -latex] (7, 0) arc (0:94: 2.5);
	\draw[line, -latex] (2, 7) arc (-180:-86: 2.5);
	\draw[line] (7, 7) arc (0:-90: 2.5);
	
	\draw[orientation_line_opp] (4.0, 3.5) arc (-180:90:0.5);
	\draw (4.5, 3.5) node {{\tiny $a$}};
	
	\draw[orientation_line] (4.0, 7.0) arc (-180:90:0.5);
	\draw (4.5, 7.0) node {{\tiny $b_1$}};
	
	\draw[orientation_line] (4.0, 5.5) arc (-180:90:0.5);
	\draw (4.5, 5.5) node {{\tiny $c_1$}};
	
	\draw[orientation_line] (4.0, 1.5) arc (-180:90:0.5);
	\draw (4.5, 1.5) node {{\tiny $b_2$}};
	
	\draw[orientation_line] (4.0, -0.15) arc (-180:90:0.5);
	\draw (4.5, -0.15) node {{\tiny $c_2$}};
\end{tikzpicture}
\hfill
\begin{tikzpicture}[scale=0.4]
%	\draw[step=1.0,gray,thin] (-1, -1) grid (10, 10);
	
	\draw[line] (0, 0) -- (2, 0) -- (2, 7) -- (0, 7) -- cycle;
	\draw[line] (7, 0) -- (9, 0) -- (9, 7) -- (7, 7) -- cycle;
	
	\draw[line] (2, 7) arc (-180:0:2.5 and 0.75);
	\draw[line] (7, 7) arc (0:180:2.5 and 0.75);
	
	\draw[line] (2, 0) arc (-180:0:2.5 and 0.75);
	\draw[line_dashed] (7, 0) arc (0:180:2.5 and 0.75);
	\draw[line] (2, 3.5) arc (-180:0:2.5 and 0.75);
	\draw[line_dashed] (7, 3.5) arc (0:78:2.5 and 0.75);
	\draw[line_dashed, -latex] (2, 3.5) arc (180:78:2.5 and 0.75);
	
	\draw[fill] (2, 3.5) circle (0.075);
	\draw[fill] (7, 3.5) circle (0.075);
	
	\draw[latex-, thick] (2, 1) -- (2, 3);
	\draw[latex-, thick] (2, 4) -- (2, 6);
	\draw[-latex, thick] (7, 1) -- (7, 3);
	\draw[-latex, thick] (7, 4) -- (7, 6);
	\draw[latex-, thick] (4.5, 2.75) arc (-90:-45:2.5 and 0.75);
	
	\draw[orientation_line_opp] (0.5, 6) arc (-180:90:0.5);
	\draw (1.0, 6.0) node {{\tiny $a$}};
	
	\draw[orientation_line_opp] (7.5, 6) arc (-180:90:0.5);
	\draw (8.0, 6.0) node {{\tiny $a'$}};
	
	\draw[orientation_line] (4, 7.0) arc (-180:90:0.5);
	\draw (4.5, 7.0) node {{\tiny $b_1$}};
	
	\draw[orientation_line] (4.0, 5.5) arc (-180:90:0.5);
	\draw (4.5, 5.5) node {{\tiny $c_1$}};
		
	\draw[orientation_line] (4.0, -0.15) arc (-180:90:0.5);
	\draw (4.5, -0.15) node {{\tiny $c_2$}};
	
	\draw[orientation_line] (4.0, 1.7) arc (-180:90:0.5);
	\draw (4.5, 1.7) node {{\tiny $b_2$}};
	
	\draw[orientation_line_opp] (4.0, 3.5) arc (-180:90:0.5);
	\draw (4.5, 3.5) node {{\tiny $w$}};
\end{tikzpicture}
\hfill\ \ 
\end{center}
\caption{\label{Figure:LMoveColouring}Coloured oriented polyhedron before (on the left) and after (on the right) $L$-move}
\end{figure}
\end{proof}

\begin{remark}
\label{Remark:ProofColoring}
In fact, in the proof of the theorem \ref{Theorem:ColouringCount} we show a more general statement: there is a natural bijection between $Col_{G}(P_1)$ and $Col_{G}(P_2)$ for any two simple spines $P_1, P_2$ of the manifold $M$.
\end{remark}

\subsection{$G$-systems}

Let $G$ be a finite group and let $\K$ be an associative ring with unit. For each triple $(a, b, c)\in G^{3}$ such that $abc = 1$, consider the finite dimensional module $V_{a, b, c}$ over the ring $\K$. We will assume that $V_{a, b, c} = V_{b, c, a} = V_{c, a, b}$, all these three notations are just different names for the same module. Furthermore, let $\I_{a, b, c}\colon V_{a, b, c}\to V^{*}_{c^{-1}, b^{-1}, a^{-1}}$ be a fixed isomorphism between the module $V_{a, b, c}$ and the dual module $V_{c^{-1}, b^{-1}, a^{-1}}^{*}$.

\begin{definition}
\label{Definition:ModuleSystem}
The pair $(\{V_{a, b, c}\}, \{\I_{a, b, c}\})$ is called \emph{module system} for the group $G$.
\end{definition}

Let $G$ be a finite group, $\K$ a ring, and $(\{V_{a, b, c}\}, \{\I_{a, b, c}\})$ a module system for $G$. Consider the family of tensors $$[a, b, c]\colon V_{b, c, (bc)^{-1}}\otimes V^{*}_{ab, c, (abc)^{-1}}\otimes V_{a, bc, (abc)^{-1}}\otimes V^{*}_{a, b, (ab)^{-1}}\to \K,$$ defined for each triple $(a, b, c)\in G^3$, and the family of tensors $$\overline{[a, b, c]}\colon V^{*}_{b, c, (bc)^{-1}}\otimes V_{ab, c, (abc)^{-1}}\otimes V^{*}_{a, bc, (abc)^{-1}}\otimes V_{a, b, (ab)^{-1}}\to \K,$$ also defined for each triple $(a, b, c)\in G^{3}$.

Describe the geometric meaning of these tensors $[a, b, c]$ and $\overline{[a, b, c]}$. Let $T^{+} = [0, 1, 2, 3]$ be a tetrahedron with ordered vertices such that this ordering induces a positive orientation of the tetrahedron (figure \ref{Figure:Tetrahedrons} on the left). Orient all edges of $T^{+}$ from $i$ to $j$, where $i < j$. Colour the edges of $T^{+}$ with elements of the group $G$ as shown in the figure \ref{Figure:Tetrahedrons} on the left. The edges of the chain $0 - 1 - 2 - 3$ are coloured by elements $a, b, c\in G$, the colours of all other edges are uniquely defined by the following rule: for the face with vertices $i < j < k$, the colour of the edge $[i, k]$ is equal to the product of the colours of the edges $[i, j]$ and $[j, k]$.

\begin{figure}[h]
\begin{center}
\ \hfill
\begin{tikzpicture}[scale=0.75]
\begin{scope}[line, decoration={
    markings,
    mark=at position 0.5 with {\arrow{latex}}}
    ] 
    \draw[postaction={decorate}] (2, 3) -- (0, 0) node[left, midway] {$a$};
    \draw[postaction={decorate}] (0, 0) -- (2, -1) node[below left, midway] {$b$};
    \draw[postaction={decorate}] (2, -1) -- (4, 1) node[below right, midway] {$c$};
    \draw[postaction={decorate}] (2, 3) -- (4, 1) node[above right, midway] {$abc$};
    \draw[postaction={decorate}] (2, 3) -- (2, -1) node[right, pos=0.4] {$ab$};
    \draw[postaction={decorate}, line_dashed] (0, 0) -- (4, 1) node[above left, pos=0.4] {$bc$};
\end{scope}

	\draw[fill] (2, 3) circle (0.04) node[above] {0};
	\draw[fill] (0, 0) circle (0.04) node[left] {1};
	\draw[fill] (2, -1) circle (0.04) node[below] {2};
	\draw[fill] (4, 1) circle (0.04) node[right] {3};
\end{tikzpicture}
\hfill
\begin{tikzpicture}[scale=0.75]
\begin{scope}[line, decoration={
    markings,
    mark=at position 0.5 with {\arrow{latex}}}
    ] 
    \draw[postaction={decorate}] (2, 3) -- (0, 0) node[left, midway] {$a$};
    \draw[postaction={decorate}] (0, 0) -- (2, -1) node[below left, midway] {$bc$};
    \draw[postaction={decorate}] (4, 1) -- (2, -1) node[below right, midway] {$c$};
    \draw[postaction={decorate}] (2, 3) -- (4, 1) node[above right, midway] {$ab$};
    \draw[postaction={decorate}] (2, 3) -- (2, -1) node[right, pos=0.4] {$abc$};
    \draw[postaction={decorate}, line_dashed] (0, 0) -- (4, 1) node[above left, pos=0.4] {$b$};
\end{scope}

	\draw[fill] (2, 3) circle (0.04) node[above] {0};
	\draw[fill] (0, 0) circle (0.04) node[left] {1};
	\draw[fill] (2, -1) circle (0.04) node[below] {3};
	\draw[fill] (4, 1) circle (0.04) node[right] {2};
\end{tikzpicture}
\hfill\ \ 
\end{center}
\caption{\label{Figure:Tetrahedrons}Coloured tetrahedrons $T^{+}$ (on the left) and $T^{-}$ (on the right)}
\end{figure}
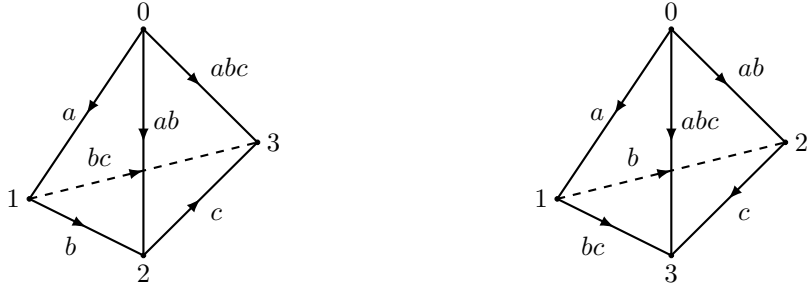

The tetrahedron $T^{+}$ has four faces $\partial_{0}T^{0} = [1, 2, 3]$, $\partial_1 T^+ = [0, 2, 3]$, $\partial_2 T^+ = [0, 1, 3]$ and $\partial_3 T^+ = [0, 1, 2]$. If the vertex order of the face $[i, j, k]$, where $i < j < k$, induces the inner normal to the face, then we assign to this face the module $V_{x, y, (xy)^{-1}}$, where $x$ is a colour of the edge $[i, j]$ and $y$ is a colour of the edge $[j ,k]$. If the order of the vertices induces an outer normal to the face $[i, j, k]$, then we assign the dual module to this face. So to the face $\partial_0 T^+$ is assigned the module $V_{b, c, (bc)^{-1}}$, to the face $\partial_1 T^+$ is assigned the module $V^{*}_{ab, c, (abc)^{-1}}$, to the face $\partial_2 T^+$ is assigned the module $V_{a, bc, (abc)^{-1}}$ and to the face $\partial_3 T^{+}$ is assigned the module $V^{*}_{a, b, (ab)^{-1}}$.

The tensor $[a, b, c]$ is a map from the tensor product of modules assigned to the faces $\partial_0 T^{+}$, $\partial_1 T^{+}$, $\partial_2 T^{+}$ and $\partial_3 T^{+}$ to the ring $\K$.

Tensors $\overline{[a, b, c]}$ defined by the similar approach, but instead of positive oriented tetrahedron we use the negative oriented tetrahedron $T^{-}$. The colouring of the edges of $T^{-}$ is shown in the figure \ref{Figure:Tetrahedrons} on the right. To the face $\partial_0 T^{-}$ we assign the module $V^{*}_{b, c, (bc)^{-1}}$, to the face $\partial_1 T^{-}$ --- the module $V_{ab, c, (abc)^{-1}}$, to the face $\partial_2 T^{-}$ --- the module $V^{*}_{a, bc, (abc)^{-1}}$ and to the face $\partial_3 T^{-}$ --- the module $V_{a, b, (ab)^{-1}}$.

\begin{definition}
\label{Definition:GSystem}
The quartet $(\{V_{a, b, c}\}, \{\I_{a, b, c}\}, \{[a, b, c]\}, \{\overline{[a, b ,c]}\})$ is called a \emph{$G$-system} if families of tensors $\{[a, b, c]\}$ and $\{\overline{[a, b, c]}\}$ satisfy the following conditions for all $a, b, c\in G$:
\begin{enumerate}
\item $\overline{[a^{-1}, ab, c]}(x_0\otimes x_1\otimes x_2\otimes x_3) = [a, b, c](x_1\otimes x_0\otimes \I^{-1}_{a, bc, (abc)^{-1}}(x_2)\otimes \I_{a^{-1}, ab, b^{-1}}(x_3))$ for any $x_0\in V^{*}_{ab, c, (abc)^{-1}}$, $x_1\in V_{b, c, (bc)^{-1}}$, $x_2\in V^{*}_{a^{-1}, abc, (bc)^{-1}}$ and $x_3\in V_{a^{-1}, ab, b^{-1}}$;
\item $\overline{[ab, b^{-1}, bc]}(x_0\otimes x_1\otimes x_2\otimes x_3) = [a, b, c](\I^{-1}_{b, c, (bc)^{-1}}(x_0)\otimes x_2\otimes x_1\otimes \I_{ab, b^{-1}, a^{-1}}(x_3))$ for any $x_0\in V^{*}_{b^{-1}, bc, c^{-1}}$, $x_1\in V_{a, bc, (abc)^{-1}}$, $x_2\in V^{*}_{ab, c, (abc)^{-1}}$ and $x_3\in V_{ab, b^{-1}, a^{-1}}$;
\item $\overline{[a, bc, c^{-1}]}(x_0\otimes x_1\otimes x_2\otimes x_3) = [a, b, c](\I^{-1}_{b, c, (bc)^{-1}}(x_0)\otimes \I_{abc, c^{-1}, (ab)^{-1}}(x_1)\otimes x_3\otimes x_2)$ for any $x_0\in V^{*}_{bc, c^{-1}, b^{-1}}$, $x_1\in V_{abc, c^{-1}, (ab)^{-1}}$, $x_2\in V^{*}_{a, b, (ab)^{-1}}$ and $x_3\in V_{a, bc, (abc)^{-1}}$.
\end{enumerate}
\end{definition}

Describe the geometric meaning of the conditions from the definition \ref{Definition:GSystem}. Let $T^{+}$ be a positively oriented tetrahedron with vertices $0, 1, 2, 3$.  As before, orient each edge $[i, j]$ of $T^{+}$ from $i$ to $j$ for $i < j$, $i, j\in\{0, 1, 2, 3\}$. Colour the oriented edges so that the chain $0 - 1 - 2 - 3$ is coloured by the elements $a, b, c\in G$ (figure \ref{Figure:ReorderTetrahedron} on the left). As described earlier, to the faces $\partial_0 T^{+}$, $\partial_1 T^{+}$, $\partial_2 T^{+}$ and $\partial_3 T^{+}$ are assigned modules $V_{b, c, (bc)^{-1}}$, $V^{*}_{ab, c, (abc)^{-1}}$, $V_{a, bc, (abc)^{-1}}$ and $V^{*}_{a, b, (ab)^{-1}}$ respectively. The tetrahedron $T^{+}$ corresponds to the tensor $[a, b, c]$.

\begin{figure}[h]
\begin{center}
\ \hfill
\begin{tikzpicture}[scale=1.2]
\begin{scope}[line, decoration={
    markings,
    mark=at position 0.5 with {\arrow{latex}}}
    ] 
    \draw[postaction={decorate}] (3, 5.2) -- (0, 0) node[left, midway] {$b$};
    \draw[postaction={decorate}] (3, 5.2) -- (6, 0) node[above right, midway] {$bc$};
    \draw[postaction={decorate}] (0, 0) -- (6, 0) node[below right, midway] {$c$};
    \draw[postaction={decorate}] (3, 1.73) -- (3, 5.2) node[above right, midway] {$a$};
    \draw[postaction={decorate}] (3, 1.73) -- (0, 0) node[above left, midway] {$ab$};
    \draw[postaction={decorate}] (3, 1.73) -- (6, 0) node[above right, midway] {$abc$};
\end{scope}
	\draw[fill] (0, 0) node[left] {$2$} circle (0.035);
	\draw[fill] (6, 0) node[right] {$3$} circle (0.035);
	\draw[fill] (3, 5.2) node[above] {$1$} circle (0.035);
	\draw[fill] (3, 1.73) node[below] {$0$} circle (0.035);
	
	\draw (2.2, 2) node {$V^{*}_{a, b, (ab)^{-1}}$};
	\draw (3.9, 2.1) node {$V_{a, bc, (abc)^{-1}}$};
	\draw (3, 0.7) node {$V^{*}_{ab, c, (abc)^{-1}}$};
	\draw (5, 4) node {$V_{b, c, (bc)^{-1}}$};
\end{tikzpicture}
\hfill
\begin{tikzpicture}[scale=1.2]
\begin{scope}[line, decoration={
    markings,
    mark=at position 0.5 with {\arrow{latex}}}
    ] 
    \draw[postaction={decorate}] (3, 5.2) -- (0, 0) node[left, midway] {$b$};
    \draw[postaction={decorate}] (3, 5.2) -- (6, 0) node[above right, midway] {$bc$};
    \draw[postaction={decorate}] (0, 0) -- (6, 0) node[below right, midway] {$c$};
    \draw[postaction={decorate}] (3, 5.2) -- (3, 1.73) node[above right, midway] {$a^{-1}$};
    \draw[postaction={decorate}] (3, 1.73) -- (0, 0) node[above left, midway] {$ab$};
    \draw[postaction={decorate}] (3, 1.73) -- (6, 0) node[above right, midway] {$abc$};
\end{scope}
	\draw[fill] (0, 0) node[left] {$2$} circle (0.035);
	\draw[fill] (6, 0) node[right] {$3$} circle (0.035);
	\draw[fill] (3, 5.2) node[above] {$0$} circle (0.035);
	\draw[fill] (3, 1.73) node[below] {$1$} circle (0.035);
	
	\draw (2.2, 2) node {$V_{a^{-1}, ab, b^{-1}}$};
	\draw (4.1, 1.8) node {$V^{*}_{a^{-1}, abc, (bc)^{-1}}$};
	\draw (3, 0.7) node {$V^{*}_{ab, c, (abc)^{-1}}$};
	\draw (5, 4) node {$V_{b, c, (bc)^{-1}}$};
\end{tikzpicture}
\hfill\ \ 
\end{center}
\caption{\label{Figure:ReorderTetrahedron}Faces and corresponding modules for positively oriented tetrahedrons (on the left) and for negatively oriented tetrahedrons obtained by swapping $0$ and $1$ (on the right)}
\end{figure}
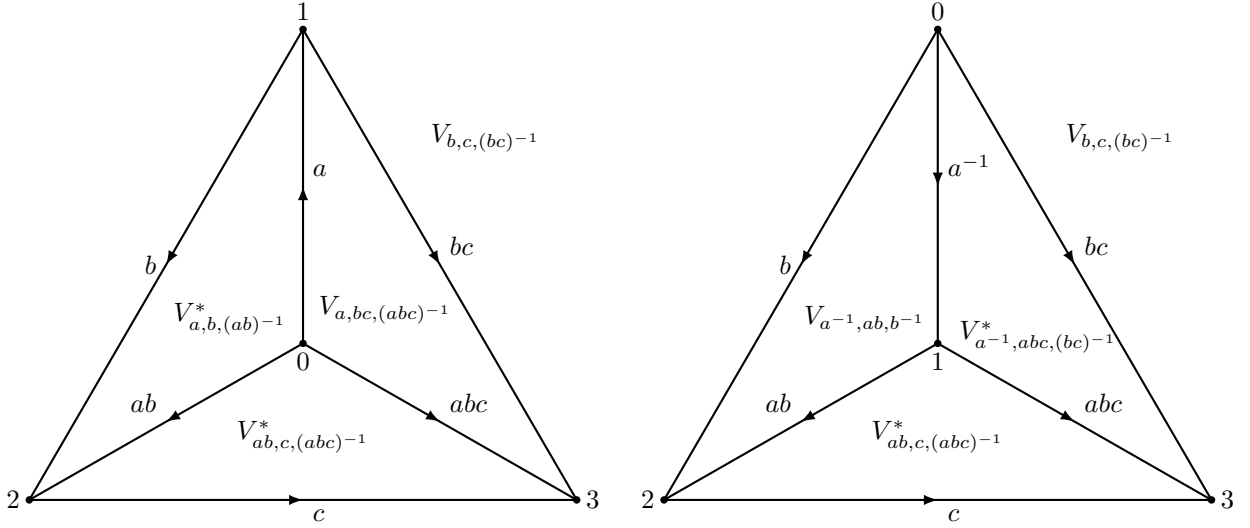

Swap the numbers $0$ and $1$ in the vertex numbering, but keep all the edge colours the same. Note that the edge connecting $0$ and $1$ is flipped, so we change the colour $a$ to $a^{-1}$ (figure \ref{Figure:ReorderTetrahedron} on the right). We get the negatively oriented tetrahedron $T^{-}$, which corresponds to the tensor $\overline{[a^{-1}, ab, c]}$. The face $\partial_0 T^{-}$ is the face $\partial_1 T^{+}$ with the associated module $V^{*}_{ab, c, (abc)^{-1}}$, the face $\partial_1 T^{-}$ is the face $\partial_0 T^{+}$ with the associated module $V_{b, c, (bc)^{-1}}$, the face $\partial_2 T^{-}$ is the face $\partial_2 T^{+}$ with associated module $V^{*}_{a^{-1}, abc, (bc)^{-1}}$, which is isomorphic to $\I_{a, bc, (abc)^{-1}}(V_{a, bc, (abc)^{-1}})$, and the face $\partial_3 T^{-}$ is the face $\partial_3 T^{+}$ with associated module $V_{a^{-1}, ab, b^{-1}}$, which is isomorphic to $\I^{-1}_{a^{-1}, ab, b^{-1}}(V^{*}_{a, b, (ab)^{-1}})$. So the first condition of the definition \ref{Definition:GSystem} guarantees that the tensors $[a, b, c]$ and $\overline{[a^{-1}, bc, c]}$ are the same. The second conditions come from the similar construction, but when the tetrahedron $T^{-}$ is obtained by swapping vertices $1$ and $2$. The third condition corresponds to swapping vertices $2$ and $3$. In other words, in the $G$-system the tensors associated with the coloured tetrahedron do not depend on the order of the vertices.

Recall the operation of the tensor \emph{contraction}. Let $\varphi\colon A\otimes B\to \K$ and $\psi\colon A^{*}\otimes C\to \K$ be tensors. Then the result of the contraction of $\varphi$ and $\psi$ along the module $A$ is a tensor $\zeta = *_{A}(\varphi\otimes \psi)$, $\zeta\colon B\otimes C\to\K$, obtained as follows. Choose any basis $\{e_1, \ldots, e_n\}$ of the module $A$, and let $\{e^1, \ldots, e^n\}$ be a dual basis of the dual module $A^{*}$. Then for any $b\in B$ and $c\in C$: $$\zeta(b\otimes c) = \sum\limits_{i = 1}^{n}\varphi(e_i\otimes b)\cdot \psi(e^i\otimes c).$$

It's easy to check that this definition of the contraction operation is correct (i.e. the result does not depend on the chosen basis of the module $A$). The tensor contraction operation naturally extends to contraction along multiple modules. For example, if $\varphi\colon A_1\otimes A_2\otimes B_1\otimes B_2\to \K$ and $\psi\colon A_1^{*}\otimes C_1\otimes C_{2}\otimes B_1^{*}\to \K$, then $*_{A_1, B_1}(\varphi\otimes \psi)\colon A_2\otimes B_2\otimes C_1\otimes C_2\to \K$ defined by the formula $$*_{A_1, B_1}(\varphi\otimes \psi)(a\otimes b\otimes c_1\otimes c_2) = \sum\limits_{i = 1}^{n}\sum\limits_{j = 1}^{m}\varphi(e_i\otimes a\otimes f_j\otimes b)\cdot \psi(e^i\otimes c_1\otimes c_2\otimes f^j),$$ where $\{e_1, \ldots, e_n\}$ is a basis of $A_1$, $\{e^1, \ldots, e^n\}$ is a dual basis of $A_1^{*}$, $\{f_1, \ldots, f_m\}$ is a basis of $B_1$, $\{f^1, \ldots, f^m\}$ is a dual basis of $B_1^{*}$.

\begin{definition}
\label{Definition:SpecialGSystem}
Let $G$ be a finite group. A $G$-system $(\{V_{a, b, c}\}, \{\I_{a, b, c}\}, \{[a, b, c]\}, \{\overline{[a, b, c]}\})$ is called \emph{special} if for any $a, b, c, d\in G$: $$
*_{V_{ab, cd, (abcd)^{-1}}}([ab, c, d]\otimes [a, b, cd]) = *_{V_{b, c, (bc)^{-1}}, V_{bc, d, (bcd)^{-1}}, V_{a, bc, (abc)^{-1}}}([b, c, d]\otimes [a, bc, d]\otimes [a, b, c]).
$$
\end{definition}

Both sides of the condition in the definition \ref{Definition:SpecialGSystem} are tensors on $$V_{b, cd, (bcd)^{-1}}\otimes V_{a, bcd, (abcd)^{-1}}\otimes V^{*}_{a, b, (ab)^{-1}}\otimes V_{c, d, (cd)^{-1}}\otimes V^{*}_{abc, d, (abcd)^{-1}}\otimes V^{*}_{ab, c, (abc)^{-1}}.$$

\begin{definition}
\label{Definition:StrongSpecialGSystem}
Let $G$ be a finite group. A special $G$-system $(\{V_{a, b, c}\}, \{\I_{a, b, c}\}, \{[a, b, c]\}, \{\overline{[a, b, c]}\})$ is called \emph{strong} if, for any $a, b, c\in G$, it satisfies the following additional condition: for any tensors $A\colon V^{*}_{ab, c, (abc)^{-1}}\otimes X\to \K$, $B\colon V_{ab, c, (abc)^{-1}}\otimes Y\to \K$, $C\colon V^{*}_{a, bc, (abc)^{-1}}\otimes Z\to \K$ and $D\colon V_{a, bc, (abc)^{-1}}\otimes T\to \K$ 
\begin{multline*}
*_{V_{ab, c, (abc)^{-1}}, V_{a, bc, (abc)^{-1}}} (A\otimes B\otimes C\otimes D) = \\ = *_{V_{ab, c, (abc)^{-1}}, V_{ab, c, (abc)^{-1}}, V_{a, bc, (abc)^{-1}}, V_{a, bc, (abc)^{-1}}} (A\otimes B\otimes *_{V_{a, b, (ab)^{-1}}, V_{b, c, (bc)^{-1}}}([a, b, c]\otimes \overline{[a, b, c]})\otimes C\otimes D),
\end{multline*}
where $X, Y, Z, T$ are tensor products of modules from the family $\{V_{a, b, c}\}$ and it duals.
\end{definition}

Let $P$ be an oriented simple spine of the oriented manifold $M$. Fix an arbitrary true vertex $v\in\mathcal{V}(P)$. The neighbourhood of $v$ in $P$ divides the neighbourhood of $v$ in $M$ into four balls. Enumerate them in arbitrary order by $0, 1, 2, 3$ and denote them by $B_0, B_1, B_2, B_3$. This enumeration induces orientations of cells and edges in the neighbourhood of $v$ in $P$ as follows. Choose the orientation of the cell that separates the ball $B_i$ from the ball $B_j$, $i < j$, so that with the normal oriented from $B_i$ to $B_j$ it defines the orientation of $M$. Edges incident to $B_0, B_1, B_2$ and $B_0, B_2, B_3$ orient from $v$, the other two edges orient to $v$ (figure \ref{Figure:VertexNeighbourhood}). We will refer to these orientations as \emph{local orientations} of cells and edges in the neighbourhood of $v$ in $P$. Note that local orientations do not necessarily coincide with the orientations of the triple lines and 2-components of $P$ (which are \emph{global orientations}).

\begin{figure}[h]
\begin{center}
\begin{tikzpicture}[scale=0.65]
	\draw[line] (0, 0) -- (5, 1) -- (6, 5);
	\draw[line] (0, 0) -- (1, 4);
	\draw[fill] (3, 2.5) node[above] {$v$} circle (0.05);
	\draw[line] (0, 0) -- (0, -2) -- (3.2, 0.66);
	\draw[line_dashed] (3.2, 0.66)-- (5.4, 2.5);
	\draw[line] (5.4, 2.5) -- (6, 3) -- (6, 5);
	\draw[line] (1, 4) -- (1, 6) -- (5, 3) -- (5, 1);
	\draw[line_dashed] (1, 4) -- (3.1, 4.42);
	\draw[line] (3.1, 4.42) -- (6, 5);
	\draw[line_dashed] (3, 2.5) -- (4.3, 3.58) node[right, pos=0.5] {$e_0$};
\begin{scope}[line, decoration={
    markings,
    mark=at position 0.5 with {\arrow{latex}}}
    ] 
    \draw[postaction={decorate}] (3, 2.5) -- (0, 0) node[left, pos=0.5] {$e_1$};
    \draw[postaction={decorate}] (3, 2.5) -- (1, 4) node[left, pos=0.5] {$e_3$};
\end{scope}

\begin{scope}[line, decoration={
    markings,
    mark=at position 0.75 with {\arrow{latex}}}
    ]
    \draw[postaction={decorate}] (5, 1) -- (3, 2.5) node[right, pos=0.65] {$e_2$};
    \draw[postaction={decorate}] (6, 5) -- (4.3, 3.58);
\end{scope}

	\draw[orientation_line_opp] (1.25, 4.5) arc (-180:90:0.5);
	\draw (1.75, 4.5) node {$a$};
	
	\draw[orientation_line_opp] (1.25, 2.4) arc (-180:90:0.5);
	\draw (1.75, 2.4) node {$ab$};
	
	\draw[orientation_line_opp] (2.5, 1.5) arc (-180:90:0.5);
	\draw (3.0, 1.5) node {$abc$};
	
	\draw[orientation_line] (0.25, -0.5) arc (-180:90:0.5);
	\draw (0.75, -0.5) node {$c$};
	
	\draw[orientation_line_opp] (3.9, 4.2) arc (-180:90:0.4);
	\draw (4.3, 4.2) node {$b$};
	
	\draw[orientation_line_opp] (4.7, 3.6) arc (-180:90:0.4);
	\draw (5.1, 3.6) node {$bc$};
	
	\draw (1.75, 6) node {$B_1$};
	\draw (0.5, 5.5) node {$B_0$};
	\draw (-1.0, -1.5) node {$B_2$};
	\draw (1.5, -1.5) node {$B_3$};
\end{tikzpicture}
\end{center}
\caption{\label{Figure:VertexNeighbourhood}Local orientations around the true vertex $v$}
\end{figure}
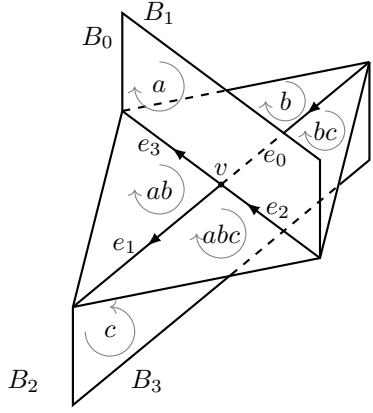

Let $G$ be a finite group and let $\xi\colon \mathcal{C}(P)\to G$ be a colouring of $P$. This colouring induces the local colouring of cells in the neighbourhood of $v$ in $P$ as follows. If the local orientation of the cell coincides with the global orientation of that cell, then the local colour coincides with the global colour. If the orientations are different, then the local colour is inverse to the global colour. Let $a$ be a local colour of the cell incident to $B_0$ and $B_1$, $b$ be a local colour of the cell incident to $B_1$ and $B_2$, and $c$ be a local colour of the cell incident to $B_2$ and $B_3$. The local colour of the cell incident to $B_0$ and $B_2$ is $ab$, incident to $B_0$ and $B_3$ is $abc$, and incident to $B_1$ and $B_3$ is $bc$ (figure \ref{Figure:VertexNeighbourhood}).

Let $(\{V_{a, b, c}\}, \{\I_{a, b, c}\}, \{[a, b, c]\}, \{\overline{[a, b, c]}\})$ be a $G$-system. If the order of the balls $B_0, \ldots, B_3$ agreed with the orientation of $M$, then assign the tensor $[a, b, c]$ to the true vertex $v$. In the opposite case assign the tensor $\overline{[a, b, c]}$. For simplicity, assume that the order of the balls agreed with the orientation of $M$. Then we assign the module $V_{b, c, (bc)^{-1}}$ to the edge $e_0$ incident to $B_1, B_2, B_3$, and the module $V_{a, bc, (abc)^{-1}}$ to the edge $e_2$ incident to $B_0, B_1, B_3$. To the edge $e_1$ incident to $B_0, B_2, B_3$ we assign the dual module $V^{*}_{ab, c, (abc)^{-1}}$, and to the edge $e_3$ incident to $B_0, B_1, B_2$ we assign the dual module $V^{*}_{a, b, (ab)^{-1}}$.

Consider the map $i_0\otimes i_1\otimes i_2\otimes i_3$, where $i_j$ is identity if the local orientation of the edge $e_j$ coincides with the global orientation of the triple line, and $i_j$ is an isomorphism from the family $\{\I_{a, b, c}\}$ (or its inverse) if the local and global orientations are different. Define the tensor $[(v, \xi)]$ by the formula $$[(v, \xi)](x\otimes y\otimes z\otimes t) = [a, b, c](i_0\otimes i_1\otimes i_2\otimes i_3(x\otimes y\otimes z\otimes t)).$$

For example, if all global orientations of triple lines start from the true vertex $v$, then $i_0 = \I^{-1}_{b, c, (bc)^{-1}}$, $i_1 = id_{V^{*}_{ab, c, (abc)^{-1}}}$, $i_2 = \I^{-1}_{a, bc, (abc)^{-1}}$ and $i_3 = id_{V^{*}_{a, b, (ab)^{-1}}}$. So in this case $$[(v, \xi)]\colon V^{*}_{b, c, (bc)^{-1}}\otimes V^{*}_{ab, c, (abc)^{-1}}\otimes V^{*}_{a, bc, (abc)^{-1}}\otimes V^{*}_{a, b, (ab)^{-1}} \to \K$$ and the value of this tensor is defined by the formula $$[(v, \xi)](x\otimes y\otimes z\otimes t) = [a, b, c](\I^{-1}_{b, c, (bc)^{-1}} (x)\otimes y\otimes \I^{-1}_{a, bc, (abc)^{-1}}(z)\otimes t).$$

\begin{definition}
\label{Definition:PolyhedronWeight}
Let $G$ be a finite group, $(\{V_{a, b, c}\}, \{\I_{a, b, c}\}, \{[a, b, c]\}, \{\overline{[a, b, c]}\})$ be a $G$-system, $P$ is an oriented special spine of the oriented 3-manifold $M$, and let $\xi\colon \mathcal{C}(P)\to G$ be a colouring.  Then the result of the contraction of all tensors $[(v, \xi)]$ for all true vertices $v\in \mathcal{V}(P)$ along modules corresponding to triple lines of $P$ is called the \emph{weight} of the coloured polyhedron $P$ and is denoted by $w_{\xi}(P)$.
\end{definition}

\begin{lemma}
\label{Lemma:WeightInvariance}
Let $G$ be a finite group, $(\{V_{a, b, c}\}, \{\I_{a, b, c}\}, \{[a, b, c]\}, \{\overline{[a, b, c]}\})$ be a $G$-system. Let $P$ be an oriented simple spine of the oriented 3-manifold $M$, $\xi\colon \mathcal{C}(P)\to G$ a colouring of $P$. Then
\begin{enumerate}
\item The value $w_{\xi}(P)$ is correctly defined, i.e. it does not depend on the order of the balls in the neighbourhood of each true vertex of the polyhedron $P$;
\item If the oriented special polyhedron $P'$ is obtained from $P$ by changing the orientation of a triple line, then $w_{\xi}(P) = w_{\xi}(P')$.
\end{enumerate}
\end{lemma}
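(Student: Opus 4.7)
A different enumeration of the four balls $B_0, B_1, B_2, B_3$ around $v$ amounts to acting by a permutation $\sigma \in \mathcal{S}_4$. Since the adjacent transpositions $(0\,1)$, $(1\,2)$, $(2\,3)$ generate $\mathcal{S}_4$, it is enough to verify invariance of the tensor $[(v,\xi)]$ under each of these three, and this is exactly the content of conditions 1, 2, 3 of Definition \ref{Definition:GSystem}, as spelled out in the discussion around Figure \ref{Figure:ReorderTetrahedron}. The swap $B_0 \leftrightarrow B_1$ reverses the induced orientation of the tetrahedron, relabels the chain colours from $(a,b,c)$ to $(a^{-1}, ab, c)$, and toggles the covariant/contravariant status of the two faces meeting along $[0,1]$, which is precisely the first condition; similarly, $(1\,2)$ and $(2\,3)$ yield the second and third conditions, with the $\I^{\pm 1}$ factors encoding the corresponding face-duality changes. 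Composing these three invariances gives invariance under every $\sigma$.

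\textbf{Part 2 (independence of triple-line orientation).} Let $e$ be the triple line whose global orientation is reversed, and let $v_1, v_2 \in \mathcal{V}(P)$ be its true-vertex endpoints (the case $v_1 = v_2$ is analogous). Only the tensors $[(v_1, \xi)]$ and $[(v_2, \xi)]$ change between $P$ and $P'$; all other factors of $w_\xi$ are untouched. At each $v_k$, the local orientation of $e$ (determined by the ball enumeration at $v_k$) is unaffected, but the local/global agreement on $e$ flips; consequently the pre-composition factor $i_j$ for the slot of $e$ toggles between the identity and an isomorphism from $\{\I_{a,b,c}^{\pm 1}\}$, and the input module in that slot switches between $V_{a,b,c}$ and its dual $V^{*}_{c^{-1},b^{-1},a^{-1}}$.

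To close the argument I examine the contraction of $[(v_1,\xi)]$ and $[(v_2,\xi)]$ along $e$: the insertion of $\I$ at one endpoint is cancelled by the insertion of $\I^{-1}$ at the other. Concretely, writing the original contribution as $\sum_i \varphi(e_i,\ldots)\,\psi(e^i,\ldots)$ for dual bases $\{e_i\}$, $\{e^i\}$ of the paired modules, after the flip one may change basis on the new module via $\I$ and on its dual via $(\I^{-1})^{*}$, so the sum is formally identical. The main obstacle is verifying that the two toggles at $v_1$ and $v_2$ really do introduce mutually dual isomorphisms rather than two copies of the same one; this is a bookkeeping step that follows from the definition of $\I_{a,b,c}\colon V_{a,b,c} \to V^{*}_{c^{-1},b^{-1},a^{-1}}$ combined with the cyclic identification $V_{a,b,c} = V_{b,c,a} = V_{c,a,b}$.
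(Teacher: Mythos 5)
Your proposal is correct and follows essentially the same route as the paper: Part 1 reduces re-enumeration of the balls to the three adjacent transpositions, which are exactly the three conditions of Definition \ref{Definition:GSystem} (as in the paper's discussion around Figure \ref{Figure:ReorderTetrahedron}), and Part 2 observes that reversing a triple line toggles the $\I^{\pm 1}$ factor at its two endpoint vertices in mutually cancelling ways, so the contraction along that line is unchanged. Your version is somewhat more explicit about the dual-basis bookkeeping than the paper's, but the argument is the same.
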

\begin{proof}
The first statement follows from the definition \ref{Definition:GSystem}. Conditions in this definition guarantee that the tensor associated with the true vertex does not change when the balls in the neighbourhood of this vertex are re-enumerated.

To prove the second statement, consider two true vertices $v_1, v_2$ of the triple line with different orientations in $P$ and $P'$. The tensor $[(v_1, \xi)]$ for $P$ and $P'$ differs by only one component (an isomorphism $\I_{a, b, c}$ is added or removed). Similarly, the tensor $[v_2, \xi]$ for $P$ and $P'$ also differs by one component. Moreover, if the isomorphism is added to the component of $[v_1, \xi]$, then it is removed from the component of $[v_2, \xi]$ and vice versa. So the result of the contraction along the module corresponding to this triple line remains the same.
\end{proof}

\subsection{Invariant $DW_{\Sy}$}

Denote by $\Z\K$ the set of all finite formal sums of elements of $\K$ with integer coefficients (i.e. $\Z\K = \{z_1 k_1 + \ldots + z_n k_n | z_i\in\Z, k_i\in \K\}$). There are natural operations ''$+$`` and ''$\cdot$`` on this set. To sum two formal sums, we should simply write the sums together and replace equal elements of $\K$ on a summand with the sum of the coefficients. To multiply two formal sums, we should open the parentheses as for polynomials and also replace similar summands with one summand. Note that these operations do not use summation in the ring $\K$, only multiplication. It's clear that $\Z\K$ is a ring.

Let $X$ be a finite multiset over $\K$ (i.e. the finite collection of elements of $\K$, possible with repetitions). For each $x\in X$, denote by $|X|_{x}$ the number of elements $x$ in the multiset $X$. Define the map $\Zmap$ that maps $X$ to the element of $\Z\K$ by the following rule: $$\Zmap(X) = |X|_{x_1} x_1 + \ldots + |X|_{x_n} x_n,$$ where $x_1, \ldots, x_n$ are all different elements of $X$.

\begin{definition}
\label{Definition:DWInvariant}
Let $G$ be a finite group, $\Sy = (\{V_{a, b, c}\}, \{\I_{a, b, c}\}, \{[a, b, c]\}, \{\overline{[a, b, c]}\})$ be a strong special $G$-system, $P$ be an oriented special spine of the oriented 3-manifold $M$. Then the \emph{symmetric Dijkgraaf -- Witten type invariant} of the manifold $M$ is $$DW_{\Sy}(M) = \Zmap(\{w_{\xi}(P) | \xi\in Col_{G}(P)\}).$$
\end{definition}

\begin{remark}
\label{Remark:DW}
To find the value of the invariant $DW_{\Sy}$ for the oriented manifold $M$, we should fix a special spine $P$ of $M$, and then fix any orientation of $P$. Then enumerate all colourings of $P$ by elements of the finite group $G$. For each colouring compute the value $w_{\xi}(P)\in \K$. As a result we get the multiset $\{x_{\xi_1}(P), \ldots, w_{\xi_n}(P)\}$. Finally, we should write this multiset as a formal sum with integer coefficients.
\end{remark}

\begin{theorem}
\label{Theorem:DWInvariance}
Let $G$ be a finite group, $\Sy = (\{V_{a, b, c}\}, \{\I_{a, b, c}\}, \{[a, b, c]\}, \{\overline{[a, b, c]}\})$ be a strong special $G$-system. Then the value $DW_{\Sy}(M)$ for an oriented 3-manifold $M$ does not depend on the special spine of $M$ or on the orientation of this special spine.
\end{theorem}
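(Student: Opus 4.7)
The plan is to check that $DW_{\Sy}(M)$ is invariant under the four elementary operations that generate the equivalence among oriented special spines: (i) reversing the orientation of a single triple line, (ii) reversing the orientation of a single 2-component, (iii) a $T$-move, and (iv) an $L$-move. Combined with the Matveev theorem cited after Definition \ref{Definition:Spine}, this will give the full statement, since any two oriented special spines of $M$ can be joined by a finite sequence of such operations.

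For each operation I would first invoke Remark \ref{Remark:ProofColoring} to fix a canonical bijection $Col_G(P_1)\to Col_G(P_2)$ between colourings before and after the move, and then show that corresponding colourings have equal weights. Invariance under (i) is exactly the second statement of Lemma \ref{Lemma:WeightInvariance}. For (ii), reversing the orientation of a 2-component $c$ replaces its global colour $g$ by $g^{-1}$; near every true vertex incident to $c$, the assignments of modules/duals to the two adjacent edges are unchanged, but the identities $i_j$ in the definition of $[(v,\xi)]$ are composed with the isomorphisms $\I_{a,b,c}$ (or their inverses) along those edges. Since each such $\I$ appears both in the tensor at $v_1$ and, paired with its inverse through the contraction along the modified triple line, in the tensor at the opposite end-vertex $v_2$, the insertions cancel and $w_\xi(P)$ is unchanged.

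Invariance under the $T$-move (iii) is the heart of the matter. The $T$-move replaces a local piece carrying two true vertices by a local piece carrying three true vertices. Using the explicit bijection between colourings constructed in the proof of Theorem \ref{Theorem:ColouringCount}, the two tensors $[(v,\xi)]$ contracted along the central triple line in the ''before'' picture are precisely the left hand side of the equation in Definition \ref{Definition:SpecialGSystem}, while the three tensors in the ''after'' picture, contracted along the three new internal triple lines, give the right hand side. The remaining free modules match under the colouring bijection, so the special condition on $\Sy$ yields $w_\xi(P)=w_{\xi'}(P')$ after the full global contraction.

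Invariance under the $L$-move (iv) uses the strong condition. The local picture of an $L$-move creates a new 2-cell bounded by a bigon of two triple lines and introduces two new true vertices whose associated local tensors, under the bijection of Theorem \ref{Theorem:ColouringCount}, form a dual pair $[a,b,c]$ and $\overline{[a,b,c]}$. Contracting these two new tensors along the modules assigned to the two new triple lines produces exactly the operator $*_{V_{a,b,(ab)^{-1}},V_{b,c,(bc)^{-1}}}([a,b,c]\otimes\overline{[a,b,c]})$ appearing in Definition \ref{Definition:StrongSpecialGSystem}, and that condition is precisely the statement that inserting this operator into any chain of contractions against tensors $A,B,C,D$ of the right types changes nothing. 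Since corresponding colourings give equal weights and the canonical bijections are bijections, the multisets $\{w_\xi(P)\}$ and $\{w_{\xi'}(P')\}$ coincide, so $\Zmap$ produces the same element of $\Z\K$. The main bookkeeping obstacle is steps (iii) and (iv), where global versus local orientations must be reconciled and the various isomorphisms $\I_{a,b,c}$ threaded through the contractions; the three axioms of Definition \ref{Definition:GSystem} are exactly what make this reconciliation possible, so that after orientations are absorbed, the algebraic identities of Definitions \ref{Definition:SpecialGSystem} and \ref{Definition:StrongSpecialGSystem} can be applied directly.
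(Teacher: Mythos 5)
Your proposal is correct and follows essentially the same route as the paper: orientation changes are absorbed via Lemma \ref{Lemma:WeightInvariance} and the colouring bijection, the $T$-move reduces to the identity in Definition \ref{Definition:SpecialGSystem}, and the $L$-move reduces to the insertion property in Definition \ref{Definition:StrongSpecialGSystem}. One small slip: in step (ii) the mechanism you describe (composing the $i_j$ with isomorphisms $\I_{a,b,c}$ that cancel through the contraction) actually pertains to reversing a triple line; for a 2-component reversal the point is simply that inverting the global colour together with the orientation leaves every local colour, hence every tensor $[(v,\xi)]$, literally unchanged, so the multiset of weights is merely re-indexed.
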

\begin{proof}
Let $P$ be a special spine of $M$. For any colouring $\xi\in Col_{G}(P)$ by the lemma \ref{Lemma:WeightInvariance} the value $w_{\xi}(P)$ does not depend on the orientation of the triple lines of $P$. The whole multiset $\{w_{\xi}(P) | \xi\in Col_{G}(P)\}$ is not changed by reversing the orientation of any 2-component of $P$ (this reversal only reorders the values in the multiset).

We should prove that if $P_1$ and $P_2$ are two simple spines of $M$, then the multisets $\{w_{\xi}(P_1) | \xi\in Col_{G}(P_1)\}$ and $\{w_{\xi}(P_2) | \xi\in Col_{G}(P_2)\}$ are the same.

Let $P_2$ be obtained from $P_1$ by a $T$-move, and let $\xi'$ be the colouring of $P_2$, corresponding to the colouring $\xi$ of $P_1$. Let $A, B$ be the true vertices of $P_1$ before the move, and $X, Y, Z$ be the true vertices of $P_2$ that appear after the $T$-move. Choose the orientations of the 2-components and triple lines of $P_1$ and $P_2$, and the colours of the 2-components of $P_1$ and $P_2$ as shown in the figure \ref{Figure:DWTMove}. Note that in this figure the local and global orientations coincide.

\begin{figure}[h]
\begin{center}
\ \hfill
\begin{tikzpicture}[scale=0.55]
 	%\draw[step=1.0,gray,thin] (-1, -1) grid (10, 10);

	\draw[line] (2, 7) -- (0, 7) -- (0, 0) -- (2, 0) -- (4.5, 2.5) -- (7, 0) -- (9, 0) -- (9, 7) -- (7, 7) -- (4.5, 4.5) -- cycle;
	\draw[line] (2, 7) arc (-180:0:2.5 and 0.75);
	\draw[line] (7, 7) arc (0:180:2.5 and 0.75);
	
	\draw[line] (2, 0) arc (-180:0:2.5 and 0.75);
	\draw[line_dashed] (7, 0) arc (0:180:2.5 and 0.75);
	
	\draw[line] (4.5, 2.5) -- (4.5, 4.5);
	
	\draw[line] (5, 7.73) -- (7.5, 9) -- (7.5, 7);
	\draw[line_dashed] (7.5, 7) -- (7.5, 2) -- (5, 0.73);
	
	\draw[fill] (4.5, 2.5) node[above right] {$B$} circle (0.05);
	\draw[fill] (4.5, 4.5) node[below left] {$A$} circle (0.05);
	
 	\draw[latex-, thick] (4.5, 3) -- (4.5, 4);
 	\draw[latex-, thick] (4, 5) -- (3, 6);
 	\draw[latex-, thick] (5, 5) -- (6, 6);
 	\draw[latex-, thick] (3, 1) -- (4, 2);
 	\draw[-latex, thick] (6, 1) -- (5, 2);
 	\draw[-latex, thick, dashed] (4.5, 4.5) -- (5, 7.73);
 	\draw[latex-, thick, dashed] (5, 0.73) -- (4.5, 2.5);
	
	\draw[orientation_line] (0.3, 6) arc (-180:90:0.7);
	\draw (1.0, 6.0) node {$abcd$};
	
	\draw[orientation_line] (7.5, 1) arc (-180:90:0.5);
	\draw (8.0, 1.0) node {$cd$};
	
	\draw[orientation_line] (6.3, 8) arc (-180:90:0.5);
	\draw (6.8, 8.0) node {$ab$};
	
	\draw[orientation_line_opp] (5.2, 7.0) arc (-180:90:0.5);
	\draw (5.7, 7.0) node {$b$};
	
	\draw[orientation_line] (3.0, 7.0) arc (-180:90:0.5);
	\draw (3.5, 7.0) node {$a$};
	
	\draw[orientation_line] (3.9, 5.5) arc (-180:90:0.5);
	\draw (4.4, 5.5) node {$bcd$};
	
	\draw[orientation_line] (3.8, 1.5) arc (-180:90:0.4);
	\draw (4.2, 1.5) node {$abc$};
	
	\draw[orientation_line] (4.0, -0.15) arc (-180:90:0.5);
	\draw (4.5, -0.15) node {$d$};
	
	\draw[orientation_line] (5.0, 1.15) arc (-180:90:0.3);
	\draw (5.3, 1.15) node {$c$};
\end{tikzpicture}
\hfill
\begin{tikzpicture}[scale=0.55]
%	\draw[step=1.0,gray,thin] (-1, -1) grid (10, 10);
	
	\draw[line] (0, 0) -- (2, 0) -- (2, 7) -- (0, 7) -- cycle;
	\draw[line] (7, 0) -- (9, 0) -- (9, 7) -- (7, 7) -- cycle;
	\draw[line] (5, 7.73) -- (7.5, 9) -- (7.5, 7);
	\draw[line_dashed] (7.5, 7) -- (7.5, 2) -- (5, 0.73);
	\draw[latex-, thick, dashed] (5, 0.73) -- (5, 4.25);
	\draw[-latex, thick, dashed] (5, 4.25) -- (5, 7.73);
	
	\draw[line] (2, 7) arc (-180:0:2.5 and 0.75);
	\draw[line] (7, 7) arc (0:180:2.5 and 0.75);
	
	\draw[line] (2, 0) arc (-180:0:2.5 and 0.75);
	\draw[line_dashed] (7, 0) arc (0:180:2.5 and 0.75);
	\draw[line] (2, 3.5) arc (-180:0:2.5 and 0.75);
	\draw[line_dashed, -latex] (7, 3.5) arc (0:78:2.5 and 0.75);
	\draw[line_dashed] (5, 4.25) arc (90:125:3.0 and 0.55);
	\draw[line_dashed, -latex] (2, 3.5) arc (180:115:2.5 and 0.75);
	
	\draw[fill] (2, 3.5) node[left] {$Y$} circle (0.05);
	\draw[fill] (7, 3.5) node[left] {$X$} circle (0.05);
	\draw[fill] (5, 4.25) node[above right] {$Z$} circle (0.05);
	
	\draw[latex-, thick] (2, 1) -- (2, 3);
	\draw[latex-, thick] (2, 4) -- (2, 6);
	\draw[-latex, thick] (7, 1) -- (7, 3);
	\draw[latex-, thick] (7, 4) -- (7, 6);
	\draw[latex-, thick] (4.5, 2.75) arc (-90:-45:2.5 and 0.75);
	
	\draw[orientation_line] (0.3, 6) arc (-180:90:0.7);
	\draw (1.0, 6.0) node {$abcd$};
	
	\draw[orientation_line] (7.5, 1) arc (-180:90:0.5);
	\draw (8.0, 1.0) node {$cd$};
	
	\draw[orientation_line] (6.3, 8) arc (-180:90:0.5);
	\draw (6.8, 8.0) node {$ab$};
	
	\draw[orientation_line_opp] (5.2, 7.0) arc (-180:90:0.5);
	\draw (5.7, 7.0) node {$b$};
	
	\draw[orientation_line] (3.0, 7.0) arc (-180:90:0.5);
	\draw (3.5, 7.0) node {$a$};
	
	\draw[orientation_line] (3.9, 5.5) arc (-180:90:0.5);
	\draw (4.4, 5.5) node {$bcd$};
	
	\draw[orientation_line] (2.5, 1.5) arc (-180:90:0.5);
	\draw (3, 1.5) node {$abc$};
	
	\draw[orientation_line] (4.0, -0.15) arc (-180:90:0.5);
	\draw (4.5, -0.15) node {$d$};
	
	\draw[orientation_line] (5.5, 1.7) arc (-180:90:0.5);
	\draw (6, 1.7) node {$c$};
	
	\draw[orientation_line_opp] (3.7, 3.5) arc (-180:90:0.5);
	\draw (4.2, 3.5) node {$bc$};
\end{tikzpicture}
\hfill \ \ 
\end{center}
\caption{\label{Figure:DWTMove}Local orientations and colourings before (on the left) and after (on the right) $T$-move}
\end{figure}
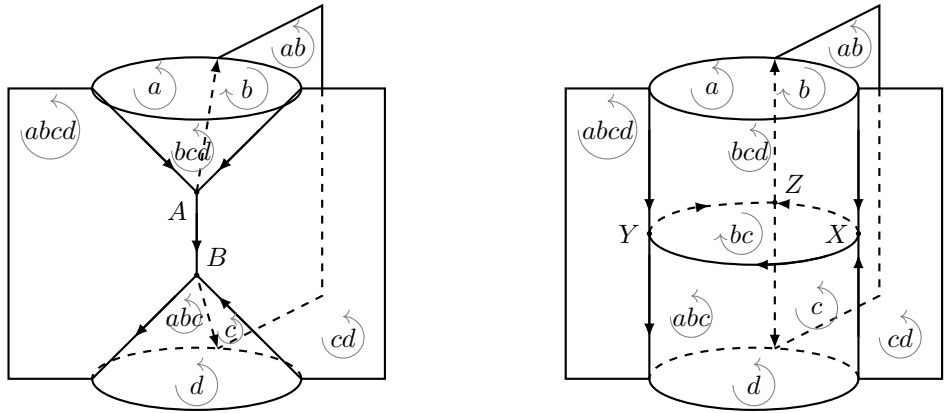

Then $$*_{V_{ab, cd, (abcd)^{-1}}}([A, \xi]\otimes [(B, \xi)]) = *_{V_{ab, cd, (abcd)^{-1}}}([a, b, cd]\otimes [ab, c, d]),$$
\begin{multline*}
*_{V_{b, c, (bc)^{-1}}, V_{bc, d, (bcd)^{-1}}, V_{a, bc, (abc)^{-1}}}([(X, \xi')]\otimes [(Y, \xi')]\otimes [(Z, \xi')]) = \\ = *_{V_{b, c, (bc)^{-1}}, V_{bc, d, (bcd)^{-1}}, V_{a, bc, (abc)^{-1}}} ([b, c, d]\otimes [a, bc, d]\otimes [a, b, c]).
\end{multline*}

Our $G$-system $\Sy$ is special, so, these tensors are the same. Hence $w_{\xi}(P_1) = w_{\xi'}(P_2)$.

For the case where $P_2$ is obtained from $P_1$ by an $L$-move the proof is similar. Let $X, Y$ be true vertices in $P_2$ that appear after the $L$-move. We can choose the orientations and colours of $P_1$ and $P_2$ as shown in the figure \ref{Figure:DWLMove}. Then $$*_{V_{b, c, bc^{-1}}, V_{a, b, (ab)^{-1}}} ([X, \xi']\otimes [Y, \xi']) = *_{V_{b, c, bc^{-1}}, V_{a, b, (ab)^{-1}}} ([a, b, c]\otimes \overline{[a, b, c]}).$$

\begin{figure}[h]
\begin{center}
\ \hfill
\begin{tikzpicture}[scale=0.55]
% 	\draw[step=1.0,gray,thin] (-1, -1) grid (10, 10);

	\draw[line] (2, 0) -- (0, 0) -- (0, 7) -- (2, 7);
	\draw[line] (7, 0) -- (9, 0) -- (9, 7) -- (7, 7);
	\draw[line] (2, 7) arc (-180:0:2.5 and 0.75);
	\draw[line] (7, 7) arc (0:180:2.5 and 0.75);
	
	\draw[line] (2, 0) arc (-180:0:2.5 and 0.75);
	\draw[line_dashed] (7, 0) arc (0:180:2.5 and 0.75);
	
	\draw[line] (2, 0) arc (180:90: 2.5);
	\draw[line, -latex] (7, 0) arc (0:94: 2.5);
	\draw[line, -latex] (2, 7) arc (-180:-86: 2.5);
	\draw[line] (7, 7) arc (0:-90: 2.5);
	
	\draw[orientation_line] (3.9, 3.5) arc (-180:90:0.6);
	\draw (4.5, 3.5) node {$abc$};
	
	\draw[orientation_line] (4.0, 7.0) arc (-180:90:0.5);
	\draw (4.5, 7.0) node {$a$};
	
	\draw[orientation_line] (4.0, 5.5) arc (-180:90:0.5);
	\draw (4.5, 5.5) node {$bc$};
	
	\draw[orientation_line] (4.0, 1.5) arc (-180:90:0.5);
	\draw (4.5, 1.5) node {$ab$};
	
	\draw[orientation_line] (4.0, -0.15) arc (-180:90:0.5);
	\draw (4.5, -0.15) node {$c$};
\end{tikzpicture}
\hfill
\begin{tikzpicture}[scale=0.55]
	\draw[line] (0, 0) -- (2, 0) -- (2, 7) -- (0, 7) -- cycle;
	\draw[line] (7, 0) -- (9, 0) -- (9, 7) -- (7, 7) -- cycle;
	
	\draw[line] (2, 7) arc (-180:0:2.5 and 0.75);
	\draw[line] (7, 7) arc (0:180:2.5 and 0.75);
	
	\draw[line] (2, 0) arc (-180:0:2.5 and 0.75);
	\draw[line_dashed] (7, 0) arc (0:180:2.5 and 0.75);
	\draw[line] (2, 3.5) arc (-180:0:2.5 and 0.75);
	\draw[line_dashed] (7, 3.5) arc (0:78:2.5 and 0.75);
	\draw[line_dashed, -latex] (2, 3.5) arc (180:78:2.5 and 0.75);
	
	\draw[fill] (2, 3.5) node[left] {$X$} circle (0.05);
	\draw[fill] (7, 3.5) node[right] {$Y$} circle (0.05);
	
	\draw[latex-, thick] (2, 1) -- (2, 3);
	\draw[latex-, thick] (2, 4) -- (2, 6);
	\draw[-latex, thick] (7, 1) -- (7, 3);
	\draw[-latex, thick] (7, 4) -- (7, 6);
	\draw[latex-, thick] (4.5, 2.75) arc (-90:-45:2.5 and 0.75);
	
	\draw[orientation_line] (0.4, 6) arc (-180:90:0.6);
	\draw (1.0, 6.0) node {$abc$};
	
	\draw[orientation_line] (7.4, 6) arc (-180:90:0.6);
	\draw (8.0, 6.0) node {$abc$};
	
	\draw[orientation_line] (4, 7.0) arc (-180:90:0.5);
	\draw (4.5, 7.0) node {$a$};
	
	\draw[orientation_line] (4.0, 5.5) arc (-180:90:0.5);
	\draw (4.5, 5.5) node {$bc$};
		
	\draw[orientation_line] (4.0, -0.15) arc (-180:90:0.5);
	\draw (4.5, -0.15) node {$c$};
	
	\draw[orientation_line] (4.0, 1.7) arc (-180:90:0.5);
	\draw (4.5, 1.7) node {$ab$};
	
	\draw[orientation_line_opp] (4.0, 3.5) arc (-180:90:0.5);
	\draw (4.5, 3.5) node {$b$};
\end{tikzpicture}
\hfill\ \ 
\end{center}
\caption{\label{Figure:DWLMove}Local orientations and colourings before (on the left) and after (on the right) $L$-move}
\end{figure}
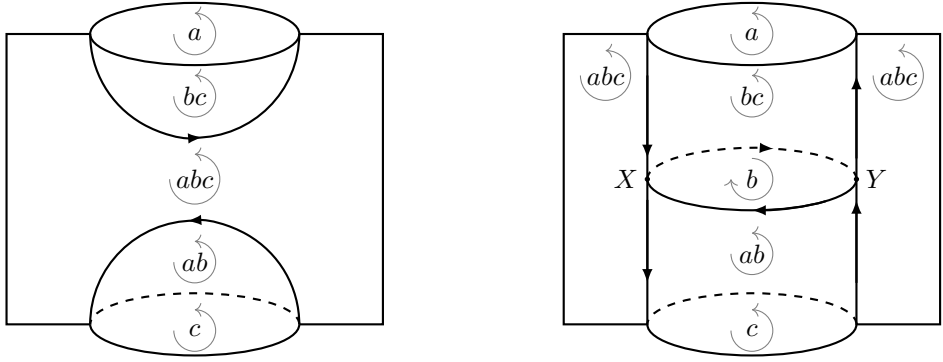

Our $G$-system $\Sy$ is strong, so the results $w_{\xi}(P_1)$ and $w_{\xi'}(P_2)$ of the contractions are the same.
\end{proof}

\begin{remark}
\label{Remark:OnlyT}
In the definitions of the symmetric Dijkgraaf -- Witten type invariants we use the strong special $G$-system $\Sy$. The special property (from the definition \ref{Definition:SpecialGSystem}) guarantees invariance under $T$-moves, and the strong property (from the definition \ref{Definition:StrongSpecialGSystem}) guarantees invariance under $L$-moves. As mentioned earlier (see also \cite[Theorem 1.2.5]{M}), any two special spines of the same 3-manifold with at least two true vertices are related only by $T$-moves. Thus, if we restrict ourselves to 3-manifolds of this class, then to construct an invariant we only need a special $G$-system without strong property.
\end{remark}

\section{1-dimensional case}

\subsection{Chain complex for finite group}

Let $G$ be a finite group, $\ksmall$ a ring. For $n\geqslant 1$ define $$D_n(G; \ksmall) = \left\{\sum\limits_{(g_{i_1}, \ldots, g_{i_n})}\alpha^{i_1\ldots i_n}(g_{i_1}, \ldots, g_{i_n})\right\},$$ where $\alpha^{i_1\ldots i_n}\in \ksmall$, and the sum is taken over all elements $(g_{i_1}, \ldots, g_{i_n})\in G^n$. Elements of the set $D_n(G; \ksmall)$ are finite sequences of elements from $G$ with coefficients from $\ksmall$. The set $D_n(G; \ksmall)$ is an abelian group with component-wise addition. Define $D_n(G; \ksmall)$ to be a trivial group for all $n \leqslant 0$.

For all $n\geqslant 2$, define the boundary homomorphisms $\partial_n\colon D_n(G; \ksmall)\to D_{n - 1}(G; \ksmall)$ as follows:
\begin{multline*}
\partial_n(g_1, \ldots, g_n) = (g_2, \ldots, g_n) - (g_1g_2, g_3, \ldots, g_n) + (g_1, g_2 g_3, g_4, \ldots, g_n) + \ldots + \\ + (-1)^{n - 1}(g_1, \ldots, g_{n-2}, g_{n-1}g_n) + (-1)^n (g_1, \ldots, g_{n - 1})
\end{multline*}
and extend this definition by linearity to all elements of $D_n(G; \ksmall)$. Define $\partial_n = 0$ for all $n\leqslant 1$.

\begin{remark}
\label{Remark:DSimple}
Consider the following maps $d_n^i\colon G^n\to G^{n - 1}$, $i = 0, \ldots, n$, defined as follows: 
\begin{center}
\begin{tabular}{l}
$d_n^{0}(g_1, \ldots, g_n) = (g_2, \ldots, g_n)$, \\
$d_n^i(g_1, \ldots, g_n) = (g_1, \ldots, g_{i - 1}, g_i g_{i + 1}, g_{i + 2}, \ldots, g_n)$ for $1\leqslant i\leqslant n - 1$, \\
$d_n^n(g_1, \ldots, g_n) = (g_1, \ldots, g_{n - 1})$.
\end{tabular}
\end{center}

Then the map $\partial_n\colon D_n(G;\ksmall)\to D_{n - 1} (G; \ksmall)$ for $n\geqslant 2$ can be defined on the basis as follows: $$\partial_n(g_1, \ldots, g_n) = \sum\limits_{i = 0}^{n}(-1)^i d_n^i(g_1, \ldots, g_n).$$
\end{remark}

\begin{theorem}
\label{Theorem:BoundaryHomomorphisms}
For any $g_1, \ldots, g_n\in G$: $\partial_{n - 1}(\partial_n (g_1, \ldots, g_n)) = 0$.
\end{theorem}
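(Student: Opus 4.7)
The plan is to reduce the identity $\partial_{n-1}\circ\partial_n = 0$ to the standard simplicial identities satisfied by the face maps $d_n^i$ of Remark \ref{Remark:DSimple}, and then observe that the signed double sum cancels in pairs. Since both $\partial_{n-1}$ and $\partial_n$ are defined by linearity, it suffices to prove the identity on a basis element $(g_1,\ldots,g_n)\in G^n$.

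The key lemma I would establish first is that for all $0\leqslant i<j\leqslant n$,
\[
d_{n-1}^{i}\circ d_n^{j} \;=\; d_{n-1}^{j-1}\circ d_n^{i}\qquad\text{as maps }G^n\to G^{n-2}.
\]
Verifying this is a routine but slightly tedious case split according to where $i$ and $j$ sit relative to $0$, $j-1$, $j$, $j+1$, and $n$. The only genuinely non-trivial subcase is $j=i+1$: then both sides produce a single merged factor at position $i$, one of the form $g_i(g_{i+1}g_{i+2})$ and the other of the form $(g_ig_{i+1})g_{i+2}$, so equality holds by associativity in $G$. All other cases reduce to simply observing that merging or dropping at two positions commutes when the positions are disjoint, with the index shift $j\mapsto j-1$ accounting for the fact that applying $d_n^i$ first shortens the tuple before $d_{n-1}^{j-1}$ acts.

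Given the simplicial identity, the computation is standard: expanding
\[
\partial_{n-1}\partial_n(g_1,\ldots,g_n) \;=\; \sum_{i=0}^{n-1}\sum_{j=0}^{n}(-1)^{i+j}\, d_{n-1}^{i}d_n^{j}(g_1,\ldots,g_n),
\]
split the double sum into the part $S_< $ with $i<j$ and the part $S_{\geqslant}$ with $i\geqslant j$. On $S_<$, apply the lemma to rewrite $d_{n-1}^i d_n^j = d_{n-1}^{j-1} d_n^i$, then relabel by setting $i'=j-1$, $j'=i$, which satisfies $i'\geqslant j'$ and has sign $(-1)^{i'+j'+1}$. The relabeled sum is exactly $-S_{\geqslant}$, so $S_< + S_{\geqslant} = 0$.

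The main obstacle is purely bookkeeping: making sure the boundary cases $i=0$, $j=n$, $i=n-1$, $j=0$ in the simplicial identity are handled correctly, since the definitions of $d_n^0$ and $d_n^n$ (dropping a factor) differ in form from the interior $d_n^i$ (merging two factors). I would address this by writing the three formulas from Remark \ref{Remark:DSimple} and checking the identity for the four combinations (both interior, $i=0$, $j=n$, or both on the boundary) separately; none requires more than unwinding the definitions. Once the simplicial identity is in hand, the cancellation of the signed double sum is automatic and finishes the proof.
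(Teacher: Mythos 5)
Your proposal is correct and follows essentially the same route as the paper: both rely on the simplicial identity relating $d_{n-1}^{i}\circ d_n^{j}$ and $d_{n-1}^{j-1}\circ d_n^{i}$ (with the adjacent case $j=i+1$ handled by associativity) and then cancel the signed double sum in pairs. Your bijective relabelling of the region $i<j$ onto $i\geqslant j$ is just a more explicit phrasing of the paper's ``splits into pairs with different signs and equal values''.
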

\begin{proof}
Using the notations from remark \ref{Remark:DSimple} we can write that $$\partial_{n - 1} (\partial_n (g_1, \ldots, g_n)) = \sum\limits_{i = 0}^{n}\sum\limits_{j = 0}^{n - 1}(-1)^{i + j}d_{n-1}^j(d_n^i(g_1, \ldots, g_n)).$$

Notice that for $j < i$: $d_{n-1}^j(d_n^i(g_1, \ldots, g_n)) = d_{n-1}^{i-1}(d_n^j(g_1, \ldots, g_n))$. Also notice that $$d_{n-1}^i(d_n^{i + 1}(g_1, \ldots, g_n)) = d_{n-1}^i(d_n^{i}(g_1, \ldots, g_n)) = (g_1, \ldots, g_{i - 1}, g_i g_{i+1} g_{i+2}, g_{i + 3}, \ldots, g_n).
$$

So the sum in the image $\partial_{n - 1}(\partial_n (g_1, \ldots, g_n))$ splits into pairs with different signs and equal values. This sum is therefore zero.
\end{proof}

Describe the geometric meaning of the boundary homomorphisms $\partial_n\colon D_n(G;\ksmall)\to D_{n-1}(G;\ksmall)$. Consider the $n$-simplex $T_n = [0, 1, \ldots, n]$ with ordered vertices. Orient each edge $[i, j]$, $i < j$, from $i$ to $j$. Colour the edges $[0, 1]$, $[1, 2]$, $\ldots$, $[n - 1, n]$ by the group elements $g_1, g_2, \ldots, g_n$ respectively. Extend this colouring to other oriented edges by the following rule: if the edge $[i,j]$, $i < j$, is coloured by $x$ and the edge $[j, k]$, $j < k$, is coloured by $y$, then colour the edge $[i,k]$ by element $xy$ (see figure \ref{Figure:Simplex} with example of oriented and coloured simplex $T_4$).

\begin{figure}[h]
\begin{center}
\begin{tikzpicture}[scale=0.65]
\begin{scope}[line, decoration={
    markings,
    mark=at position 0.5 with {\arrow{latex}}}
    ]
	\draw[line, postaction={decorate}] (90:5) -- (162:5) node[above left, midway] {$g_1$};
	\draw[line, postaction={decorate}] (162:5) -- (234:5) node[below left, midway] {$g_2$};
	\draw[line, postaction={decorate}] (234:5) -- (306:5) node[below, midway] {$g_3$};
	\draw[line, postaction={decorate}] (306:5) -- (378:5) node[below right, midway] {$g_4$};
	\draw[line, postaction={decorate}] (90:5) -- (378:5) node[above right, midway] {$g_1 g_2 g_3 g_4$};
	
	\draw[line, postaction={decorate}] (90:5) -- (234:5) node[right, midway] {$g_1 g_2$};
	\draw[line, postaction={decorate}] (90:5) -- (306:5) node[above right, midway] {$g_1 g_2 g_3$};
	\draw[line, postaction={decorate}] (162:5) -- (378:5) node[above, midway] {$g_2 g_3 g_4$};
	
	\draw[line, postaction={decorate}] (162:5) -- (306:5) node[above right, midway] {$g_2 g_3$};
	\draw[line, postaction={decorate}] (234:5) -- (378:5) node[below right, pos=0.45] {$g_3 g_4$};
\end{scope}
	\draw[fill] (90:5) node[above] {0} circle (0.075);
	\draw[fill] (162:5) node[left] {1} circle (0.075);
	\draw[fill] (234:5) node[below left] {2} circle (0.075);
	\draw[fill] (306:5) node[below right] {3} circle (0.075);
	\draw[fill] (378:5) node[right] {4} circle (0.075);
\end{tikzpicture}
\end{center}
\caption{\label{Figure:Simplex}Oriented and coloured simplex $T_4$}
\end{figure}
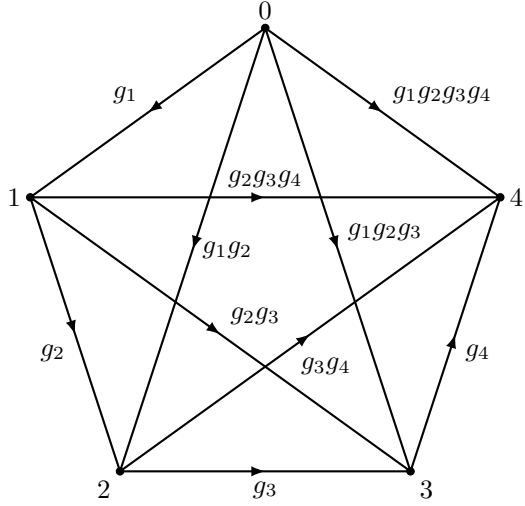

The coloured simplex $T_n$ corresponds to the basis element $(g_1, \ldots, g_n)\in D_n(G; \ksmall)$. The simplex $T_n$ has $n$ faces $\partial_0 T_n = [1, \ldots, n]$, $\partial_1 T_n = [0, 2, \ldots, n]$, $\ldots$, $\partial_n T_n = [0, \ldots, n-1]$. These faces $\partial_0 T_n, \ldots, \partial_n T_n$ are also simplices with oriented and coloured edges (the colouring is induced by the colouring of $T_n$). So each of them corresponds to a basis element of $D_{n - 1}(G;\ksmall)$. In the definition of $\partial_n\colon D_n(G;\ksmall)\to D_{n - 1}(G;\ksmall)$ we map the basis element of $D_n$ (which corresponds to the simplex $T_n$) to the alternative sum of basis elements of $D_{n-1}$ (which corresponds to the faces $\partial_0 T_n, \ldots, \partial_n T_n$).

For $n\geqslant 2$ and for any $g_1, \ldots, g_n\in G$ consider the following elements of $D_n(G;\ksmall)$:
\begin{center}
\begin{tabular}{l}
$\gamma^1_{g_1, \ldots, g_n} = (g_1, \ldots, g_n) + (g_1^{-1}, g_1 g_2, g_3, \ldots, g_n)$, \\
$\gamma^2_{g_1, \ldots, g_n} = (g_1, \ldots, g_n) + (g_1 g_2, g_2^{-1}, g_2 g_3, g_4, \ldots, g_n)$, \\
$\gamma^3_{g_1, \ldots, g_n} = (g_1, \ldots, g_n) + (g_1, g_2 g_3, g_3^{-1}, g_3 g_4, g_5, \ldots, g_n)$, \\
$\vdots$ \\
$\gamma^{n - 1}_{g_1, \ldots, g_n} = (g_1, \ldots, g_n) + (g_1, \ldots, g_{n - 3}, g_{n - 2} g_{n - 1}, g_{n-1}^{-1}, g_{n-1} g_n)$, \\
$\gamma^n_{g_1, \ldots, g_n} = (g_1, \ldots, g_n) + (g_1, \ldots, g_{n - 2}, g_{n-1} g_n, g_n^{-1})$.
\end{tabular}
\end{center}

For $n = 1$ define $\gamma^1_{g_1} = (g_1) + (g_1^{-1})$.

For each $n\geqslant 1$ consider the subgroup $Q_n\subseteq D_n(G;\ksmall)$ generated by the elements $\gamma^1_{g_1, \ldots, g_n}, \ldots, \gamma^n_{g_1, \ldots, g_n}$ for all $g_1, \ldots, g_n\in G$.

\begin{proposition}
\label{Proposition:BoundaryQn}
$\partial_n (Q_n)\subseteq Q_{n - 1}$.
\end{proposition}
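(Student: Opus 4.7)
The plan is to verify the inclusion directly on generators: by linearity it suffices to show that $\partial_n(\gamma^i_{g_1, \ldots, g_n}) \in Q_{n-1}$ for every $i \in \{1, \ldots, n\}$ and every $(g_1, \ldots, g_n) \in G^n$. Write $s_i(g_1, \ldots, g_n)$ for the second tuple appearing in $\gamma^i_{g_1, \ldots, g_n}$, so that $\gamma^i_{g_1, \ldots, g_n} = (g_1, \ldots, g_n) + s_i(g_1, \ldots, g_n)$. Geometrically $s_i$ is the operation of swapping vertices $i-1$ and $i$ of the coloured simplex $[0, 1, \ldots, n]$, and this picture makes the combinatorics transparent: the two simplices $T$ and $s_i T$ share exactly the two faces obtained by omitting either of the swapped vertices, while every other face of $s_i T$ is the corresponding face of $T$ with those two vertices still swapped.

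Using the compact form of $\partial_n$ from Remark \ref{Remark:DSimple}, the proof reduces to comparing $d_n^j(g_1, \ldots, g_n)$ with $d_n^j(s_i(g_1, \ldots, g_n))$ in three ranges of $j$. For $j \leqslant i-2$ the face operator $d_n^j$ acts strictly to the left of the swap, and an elementary check gives the commutation $d_n^j(s_i(g_1, \ldots, g_n)) = s_{i-1}(d_n^j(g_1, \ldots, g_n))$; the combined $j$-th contribution to $\partial_n(\gamma^i_{g_1, \ldots, g_n})$ is therefore $(-1)^j \gamma^{i-1}_{d_n^j(g_1, \ldots, g_n)}$, which lies in $Q_{n-1}$. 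Symmetrically, for $j \geqslant i+1$ one has $d_n^j(s_i(g_1, \ldots, g_n)) = s_i(d_n^j(g_1, \ldots, g_n))$, and the $j$-th contribution is $(-1)^j \gamma^i_{d_n^j(g_1, \ldots, g_n)} \in Q_{n-1}$.

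The remaining indices $j = i-1$ and $j = i$ correspond to the two faces common to $T$ and $s_i T$: a direct computation gives the cross identities $d_n^{i-1}(g_1, \ldots, g_n) = d_n^i(s_i(g_1, \ldots, g_n))$ and $d_n^i(g_1, \ldots, g_n) = d_n^{i-1}(s_i(g_1, \ldots, g_n))$. Because of the sign pattern $(-1)^{i-1}$ versus $(-1)^i$ in each of the two boundaries, the four contributions from these two indices cancel pairwise. Summing all three ranges then expresses $\partial_n(\gamma^i_{g_1, \ldots, g_n})$ as a $\Z$-linear combination of generators of the forms $\gamma^{i-1}_{\ast}$ and $\gamma^i_{\ast}$ of $Q_{n-1}$, which finishes the proof.

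The only place I expect bookkeeping to be delicate is at the extremes $i = 1$ and $i = n$, where one of the ranges $j \leqslant i-2$ or $j \geqslant i+1$ is empty and the formulas for $s_1$, $s_n$, $d_n^0$, $d_n^n$ degenerate (deleting a coordinate rather than merging two). However these degeneracies go the right way: the deletion formulas still satisfy the same commutation identities (for instance $d_n^0(s_i(g_1, \ldots, g_n)) = s_{i-1}(d_n^0(g_1, \ldots, g_n))$ whenever $i \geqslant 2$, and symmetrically at $j = n$), so the three-case scheme above covers every situation without extra work.
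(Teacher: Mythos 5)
Your proposal is correct and follows essentially the same route as the paper: reduce to the generators $\gamma^i_{g_1,\ldots,g_n}$, observe the two cross identities $d_n^{i-1}(s_i(\cdot)) = d_n^i(\cdot)$ and $d_n^i(s_i(\cdot)) = d_n^{i-1}(\cdot)$ which make the contributions at $j = i-1, i$ cancel by sign, and note that each remaining pair $d_n^j(\cdot) + d_n^j(s_i(\cdot))$ is itself a generator of $Q_{n-1}$. Your version is slightly more explicit than the paper's (identifying the resulting generator as $\gamma^{i-1}$ or $\gamma^i$ depending on whether $j \leqslant i-2$ or $j \geqslant i+1$, and checking the degenerate cases $i = 1, n$ fit the same scheme), but the argument is the same.
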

\begin{proof}
We should check that for any $i\in\{1, \ldots, n\}$: $\partial_n(\gamma^i_{g_1, \ldots, g_n})\in Q_{n - 1}$.

Initially check equality for $i\neq 1,n$. In this case $$\gamma^i_{g_1, \ldots, g_n} = (g_1, \ldots, g_n) + (g_1, \ldots, g_{i-1}g_i, g_i^{-1}, g_i g_{i + 1}, \ldots, g_n).$$

Notice that $$d_n^{i-1}(g_1, \ldots, g_{i-1}g_i, g_i^{-1}, g_i g_{i + 1}, \ldots, g_n) = d_n^i(g_1, \ldots, g_n) = (g_1, \ldots, g_i, g_i g_{i + 1}, g_{i + 2}, \ldots, g_n),$$ $$d_n^i(g_1, \ldots, g_{i - 1}g_i, g_i^{-1}, g_i g_{i + 1}, \ldots, g_n) = d_n^{i-1}(g_1, \ldots, g_n) = (g_1, \ldots, g_{i-2}, g_{i-1}g_i, g_{i+1}, \ldots, g_n).$$

Also notice that for $j\neq i, i-1$: $$d_n^j(g_1, \ldots, g_n) + d_n^j(g_1, \ldots, g_{i-1}g_i, g_i^{-1}, g_i g_{i + 1}, \ldots, g_n)\in Q_{n-1}.$$

So in the image $\partial_n(\gamma^i_{g_1, \ldots, g_n})$ we cancel the pair of summands $d_n^{i-1}(g_1, \ldots, g_{i-1}g_i, g_i^{-1}, g_i g_{i + 1}, \ldots, g_n)$, $d_n^i(g_1, \ldots, g_n)$ and $d_n^i(g_1, \ldots, g_{i - 1}g_i, g_i^{-1}, g_i g_{i + 1}, \ldots, g_n)$, $d_n^{i-1}(g_1, \ldots, g_n)$ (because these summands have different signs). All other summand pairs are in $Q_{n - 1}$, so $\partial_n(\gamma^i_{g_1, \ldots, g_n})\in Q_{n - 1}$.

For the cases $i = 0, n$ the proof is similar. It's sufficient to note that $$d_n^0(g_1^{-1}, g_1 g_2, \ldots, g_n) = d_n^1(g_1, \ldots, g_n) = (g_1 g_2, g_3, \ldots, g_n),$$ $$d_n^1(g_1^{-1}, g_1 g_2, \ldots, g_n) = d_n^0(g_1, \ldots, g_n) = (g_2, \ldots, g_n),$$ $$d_n^{n-1}(g_1, \ldots, g_{n-1}g_n, g_n^{-1}) = d_n^n(g_1, \ldots, g_n) = (g_1, \ldots, g_{n-1}),$$ $$d_n^n(g_1, \ldots, g_{n-1}g_n, g_n^{-1}) = d_n^{n-1}(g_1, \ldots, g_n) = (g_1, \ldots, g_{n-1} g_n).$$

Then, as previously, $\partial_n(\gamma^1_{g_1, \ldots, g_n}), \partial_n(\gamma^n_{g_1, \ldots, g_n})\in Q_{n - 1}$.
\end{proof}

For the finite group $G$ and the abelian group $\ksmall$, construct the chain complex as follows. Chain groups are $C_n(G; \ksmall) = D_n(G; \ksmall) / Q_n$. Boundary homomorphisms $\partial_n\colon C_n(G; \ksmall)\to C_{n - 1}(G; \ksmall)$ are induced by $\partial_n\colon D_n(G; \ksmall)\to D_{n-1}(G; \ksmall)$ (and denoted by the same symbols). It follows from the theorem \ref{Theorem:BoundaryHomomorphisms} and proposition \ref{Proposition:BoundaryQn} that this is a correctly defined chain complex.

Consider the corresponding cochain complex with cochain groups $C^n(G; \ksmall) = \{\omega\colon C_n(G; \ksmall)\to \ksmall\}$ and coboundary homomorphisms $\partial^n\colon C^{n - 1}(G; \ksmall)\to C^n(G; \ksmall)$. The most important for our purposes are the 3-cocycles $\omega\in \ker \partial^4$. These cocycles are maps $\omega\colon G^3\to \ksmall$ which satisfy the following conditions for any $g_1, g_2, g_3, g_4\in G$:
\begin{enumerate}
\item $\omega(g_1, g_2, g_3) + \omega(g_1^{-1}, g_1 g_2, g_3) = 0$;
\item $\omega(g_1, g_2, g_3) + \omega(g_1 g_2, g_2^{-1}, g_3) = 0$;
\item $\omega(g_1, g_2, g_3) + \omega(g_1, g_2 g_3, g_3^{-1}) = 0$;
\item $\omega(g_2, g_3, g_4) - \omega(g_1 g_2, g_3, g_4) + \omega(g_1, g_2 g_3, g_4) - \omega(g_1 g_2, g_3 g_4) + \omega(g_1, g_2, g_3) = 0$.
\end{enumerate}

\subsection{Invariant $DW_{\omega}$}

For each 3-cocycle $\omega\in\ker\partial^4$, define the special $G$-system $\Sy_{\omega}$ as follows. The ring $\K$ is a ring with multiplicative group isomorphic to additive group of $\ksmall$. All modules $V_{a, b, c}$ are one-dimensional. All isomorphisms $\I_{a, b, c}$ are trivial (this means that $\I_{a, b, c}(1)$ is a linear map from $\K$ to $\K$ such that $1\mapsto 1$). The tensors $[a, b, c]$ and $\overline{[a, b, c]}$ are tensors on a one-dimensional module, so they are just elements of $\K$. For all $a, b, c\in G$ define $[a, b, c] = \omega(a, b, c)$ and $\overline{[a, b, c]} = -\omega(a, b, c)$.

It is clear that $\Sy_{\omega}$ is a special $G$-system, so it defines the symmetric Dijkgraaf -- Witten type invariant, which we'll call $DW_{\omega}$. In this simple one-dimensional case, the additive structure of $\K$ is not important. So it's convenient to write the multiplicative operation on $\K$ additively.

The invariant $DW_{\omega}$ can be computed in the following way. Let $P$ be a special spine of the manifold $M$. Choose any orientation of 2-components and triple lines of $P$, and choose the colouring $\xi\colon P\to G$. Then, for each true vertex $v\in \mathcal{V}(P)$, compute the value $\omega(a, b, c)$, where $a, b, c$ are colours of cells incident to $v$, with the property that exactly one of them (with colour $b$) is incident to others (with colours $a$ and $c$). Get $w_{\xi}(P)$ as the sum of values $\omega(a, b, c)$ over all true vertices. Then $DW_{\omega}(M) = \Zmap(\{w_{\xi}(P) | \xi\in Col_G(P)\})$.

\begin{proposition}
\label{Proposition:DWTrivial}
Let $G$ be a finite group. Then for any $\delta\in C^2(G; \ksmall)$ the invariant $DW_{\omega}$, where $\omega = \partial^3(\delta)$, is trivial.
\end{proposition}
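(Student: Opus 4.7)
The plan is to show that $w_\xi(P) = 0$ for every colouring $\xi\in Col_{G}(P)$, where $0$ denotes the multiplicative identity of $\K$ (written additively, as in this subsection). Once this is established, the multiset $\{w_\xi(P) \mid \xi\in Col_G(P)\}$ consists of $|Col_G(P)|$ copies of $0$, and since $|Col_G(M)|$ is itself a well-defined invariant by Theorem~\ref{Theorem:ColouringCount}, $DW_\omega(M)$ carries no information beyond it --- this is what \emph{trivial} means here.

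First I would expand $\omega = \partial^3 \delta$ via Remark~\ref{Remark:DSimple}, obtaining $\omega(a, b, c) = \delta(b, c) - \delta(ab, c) + \delta(a, bc) - \delta(a, b)$. Writing contractions additively, this yields
\[
w_\xi(P) = \sum_{v\in\mathcal{V}(P)} \omega(a_v, b_v, c_v) = \delta\Bigl(\sum_{v\in\mathcal{V}(P)} \partial_3(a_v, b_v, c_v)\Bigr),
\]
where $(a_v, b_v, c_v)$ are the local colours at the true vertex $v$ and the inner sum is taken in $C_2(G; \ksmall)$. The proposition therefore reduces to the purely combinatorial identity $\sum_{v\in\mathcal{V}(P)} \partial_3(a_v, b_v, c_v) = 0$ in $C_2(G; \ksmall)$, which no longer involves $\delta$ at all.

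To establish this identity, I would expand each $\partial_3(a_v, b_v, c_v)$ into its four face terms. These terms are in natural bijection with pairs (true vertex $v$, triple line emanating from $v$), since a face of the tetrahedron $T_v$ corresponds to the triple line incident to the three balls whose indices label that face. Regrouping the total sum by triple line, every triple line $e$ of $P$ contributes exactly two terms --- one per endpoint $v_1, v_2$, which are necessarily true vertices since $P$ is special and its triple lines are intervals. The crux is that these two terms cancel in $C_2(G; \ksmall)$: both encode the same oriented triangle, whose three edges carry the colours of the three 2-components meeting at $e$, and the $Q_2$-relations $(g_1, g_2) + (g_1^{-1}, g_1 g_2)\equiv 0$, $(g_1, g_2) + (g_1 g_2, g_2^{-1})\equiv 0$ are precisely what identifies the different ``middle-vertex'' presentations of this triangle in the quotient, up to sign. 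The required overall minus sign comes from the orientation of $M$: the orientations induced on the shared face by $T_{v_1}$ and $T_{v_2}$ are opposite, which matches the alternating face-index signs built into $\partial_3$.

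The main obstacle is the explicit sign and presentation bookkeeping in the previous paragraph. Concretely I would, for each triple line $e$, choose enumerations of the four balls at $v_1$ and $v_2$ compatible with the orientation of $e$ and of its three incident 2-components, list the two face terms they produce, and verify via the $Q_2$-relations that these become exact negatives in the quotient $C_2(G; \ksmall) = D_2(G; \ksmall)/Q_2$; the (possibly degenerate) case $v_1 = v_2$ of a triple line looping back to the same true vertex is handled by the same argument, using that the two face-positions at $v$ into which $e$ enters and leaves carry opposite induced orientations. Once the combinatorial identity is checked, applying $\delta$ gives $w_\xi(P) = \delta(0) = 0$ for every $\xi$, completing the proof.
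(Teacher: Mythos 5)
Your proposal is correct and follows essentially the same route as the paper: expand $\omega(a,b,c)=\delta(b,c)-\delta(ab,c)+\delta(a,bc)-\delta(a,b)$, attach one $\pm\delta(x,y)$ term to each (true vertex, incident triple line) pair, and cancel the two contributions of each triple line using the $Q_2$-relations on $\delta$ and the sign coming from orientations. Your phrasing of the cancellation as the chain-level identity $\sum_{v}\partial_3(a_v,b_v,c_v)=0$ in $C_2(G;\ksmall)$ is just a mild repackaging of the paper's argument, which applies $\delta$ first and cancels the resulting values directly.
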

\begin{remark}
\label{Remark:Trivial}
The triviality of $DW_{\omega}$ means that if $P$ is a special polyhedron and $\xi\colon\mathcal{C}(P)\to G$ is colouring, then $w_{\xi}(P) = 0\in\K$.
\end{remark}
\begin{proof}
Let $P$ be a special oriented polyhedron, $\xi\colon \mathcal{C}(P)\to G$ a colouring, and $v\in\mathcal{V}(P)$ a true vertex. Let $B_0, \ldots, B_3$ be balls in the neighbourhood of $v$. Let $a, b, c\in G$ be the colours of the cells separating $B_0$ from $B_1$, $B_1$ from $B_2$ and $B_2$ from $B_3$ respectively (figure \ref{Figure:VertexSimple}). Assume that the order of the balls $B_0, \ldots, B_3$ is consistent with the orientation of $M$.

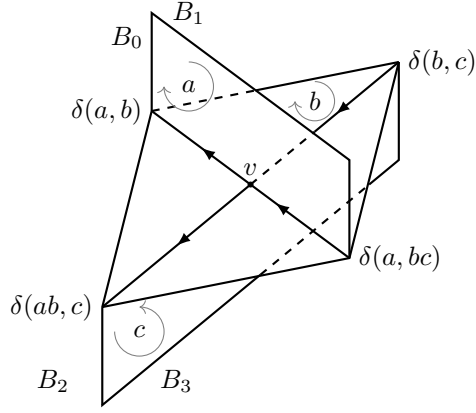
\begin{figure}[h]
\begin{center}
\begin{tikzpicture}[scale=0.65]
	\draw[line] (0, 0) -- (5, 1) -- (6, 5);
	\draw[line] (0, 0) -- (1, 4);
	\draw[fill] (3, 2.5) node[above] {$v$} circle (0.05);
	\draw[line] (0, 0) -- (0, -2) -- (3.2, 0.66);
	\draw[line_dashed] (3.2, 0.66)-- (5.4, 2.5);
	\draw[line] (5.4, 2.5) -- (6, 3) -- (6, 5);
	\draw[line] (1, 4) -- (1, 6) -- (5, 3) -- (5, 1);
	\draw[line_dashed] (1, 4) -- (3.1, 4.42);
	\draw[line] (3.1, 4.42) -- (6, 5);
	\draw[line_dashed] (3, 2.5) -- (4.3, 3.58);
\begin{scope}[line, decoration={
    markings,
    mark=at position 0.5 with {\arrow{latex}}}
    ] 
    \draw[postaction={decorate}] (3, 2.5) -- (0, 0);
    \draw[postaction={decorate}] (3, 2.5) -- (1, 4);
\end{scope}

\begin{scope}[line, decoration={
    markings,
    mark=at position 0.75 with {\arrow{latex}}}
    ]
    \draw[postaction={decorate}] (5, 1) -- (3, 2.5);
    \draw[postaction={decorate}] (6, 5) -- (4.3, 3.58);
\end{scope}

	\draw[orientation_line_opp] (1.25, 4.5) arc (-180:90:0.5);
	\draw (1.75, 4.5) node {$a$};
	
	\draw[orientation_line] (0.25, -0.5) arc (-180:90:0.5);
	\draw (0.75, -0.5) node {$c$};
	
	\draw[orientation_line_opp] (3.9, 4.2) arc (-180:90:0.4);
	\draw (4.3, 4.2) node {$b$};
	
	\draw (1.75, 6) node {$B_1$};
	\draw (0.5, 5.5) node {$B_0$};
	\draw (-1.0, -1.5) node {$B_2$};
	\draw (1.5, -1.5) node {$B_3$};
	
	\draw (1, 4) node[left] {$\delta(a, b)$};
	\draw (6, 5) node[right] {$\delta(b, c)$};
	\draw (5, 1) node[right] {$\delta(a, bc)$};
	\draw (0, 0) node[left] {$\delta(ab, c)$};
\end{tikzpicture}
\end{center}
\caption{\label{Figure:VertexSimple}Coloured neighbourhood of the true vertex}
\end{figure}

By definition $[(v, \xi)] = \omega(a, b, c) = \delta(b, c) - \delta(ab, c) + \delta(a, bc) - \delta(a, b)$. So, to compute the value $[(v, \xi)]$, we should compute $\delta(x, y)$, where $x$ and $y$ are colours of cells in the neighbourhood of each edge near the vertex $v$ (with local orientations of cells matching local orientations of edges), and get this value with ''$+$`` if the local orientation of the edge is incoming to $v$, and with ''$-$`` otherwise. The correctness follows from the fact that $\delta(x, y) = -\delta(x^{-1}, xy) = \delta(y, (xy)^{-1}) = -\delta(y^{-1}, x^{-1}) = \delta((xy)^{-1}, x)$ for all $x, y\in G$. Finally, notice that in the sum $\sum\limits_{v\in\mathcal{V}(P)}[(v, \xi)]$ each value $\delta(x, y)$ comes with ''$+$`` for one vertex and with ''$-$`` for another vertex. So the total weight $w_{\xi}(P)$ is always zero.
\end{proof}

It follows from the proposition \ref{Proposition:DWTrivial} that the invariant $DW_{\omega}$ is defined by the cohomology class $[\omega]\in H^3(G; \ksmall)$. It does not change if we choose another representative of the class.

\begin{example}
\label{Example:LenseSpace}
Let $G = \mathbb{Z}_4 = \{0, 1, 2, 3\}$, and let $\K = \mathbb{Z}_2 = \{1, t\}$ ($t^2 = 1$). Consider the 3-cocycle $\omega\colon G^3\to \K$ such that $\omega(1, 1, 1) = t$ (hence $\omega(1, 2, 3) = \omega(2, 1, 2) = \omega(2, 3, 2) = \omega(3, 2, 1) = \omega(3, 3, 3) = t$) and for other $a, b, c\in G$: $\omega(a, b, c) = 1$. It's easy to check that $\omega$ is a 3-cocycle.

Let $M$ be a lens space $L_{4, 1}$, and let $P$ be a special spine of $M$, shown in the figure \ref{Figure:SpineL41} (the spine is represented as a neighbourhood of its singular graph).

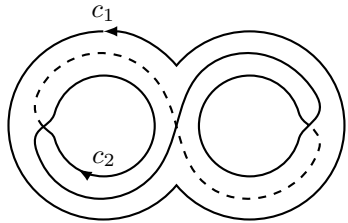
\begin{figure}[h]
\begin{center}
\begin{tikzpicture}[scale=0.75]
\begin{scope}[line, decoration={
    markings,
    mark=at position 0.95 with {\arrow{latex}}}
    ]
    
    \draw[line, postaction={decorate}] (5.9846,27.3001) .. controls
    (5.9186, 27.3464) and (5.8454, 27.4032) .. (5.8115, 27.5172) .. controls
    (5.7079, 27.8947) and (5.3623, 28.1720) .. (4.9519, 28.1720) .. controls
    (4.4596, 28.1720) and (4.0606, 27.7730) .. (4.0606, 27.2807) .. controls
    (4.0606, 26.7884) and (4.4596, 26.3894) .. (4.9519, 26.3894) .. controls
    (5.3612, 26.3894) and (5.7061, 26.6653) .. (5.8107, 27.0413) .. controls
    (5.8681, 27.3190) and (6.3875, 27.4079) .. (6.0926, 27.8713) .. controls
    (5.8831, 28.2108) and (5.5389, 28.4731) .. (5.1470, 28.5478) .. controls
    (4.9510, 28.5852) and (4.7457, 28.5757) .. (4.5555, 28.5157) .. controls
    (4.3653, 28.4556) and (4.1908, 28.3445) .. (4.0596, 28.1943) .. controls
    (3.8412, 27.9442) and (3.7555, 27.6086) .. (3.6670, 27.2886) .. controls
    (3.5941, 27.0255) and (3.5126, 26.7591) .. (3.3566, 26.5350) .. controls
    (3.2393, 26.3663) and (3.0812, 26.2259) .. (2.8989, 26.1312) .. controls
    (2.7166, 26.0365) and (2.5103, 25.9878) .. (2.3049, 25.9935) .. controls
    (2.0995, 25.9991) and (1.8956, 26.0592) .. (1.7212, 26.1678) .. controls
    (1.5468, 26.2764) and (1.4025, 26.4333) .. (1.3110, 26.6173) .. controls
    (0.9532, 27.2982) and (1.4541, 27.1889) .. (1.5352, 27.5201) .. controls
    (1.6398, 27.8961) and (1.9846, 28.1720) .. (2.3940, 28.1720) .. controls
    (2.8862, 28.1720) and (3.2853, 27.7730) .. (3.2853, 27.2807) .. controls
    (3.2853, 26.7884) and (2.8862, 26.3894) .. (2.3940, 26.3894) node[above] {$c_2$} .. controls
    (1.9836, 26.3894) and (1.6380, 26.6667) .. (1.5343, 27.0442) .. controls
    (1.5004, 27.1582) and (1.4429, 27.1584) .. (1.3417, 27.2585);    
\end{scope}

	\draw[line, dashed] (1.3309,27.2684) .. controls (1.1219, 27.4996) and (1.1606,
    27.7300) .. (1.2857, 27.9520) .. controls (1.4107, 28.1740) and (1.5468,
    28.2866) .. (1.7212, 28.3952) .. controls (1.8956, 28.5038) and (2.0995,
    28.5639) .. (2.3049, 28.5695) .. controls (2.5103, 28.5751) and (2.7166,
    28.5265) .. (2.8989, 28.4318) .. controls (3.0812, 28.3371) and (3.2393,
    28.1967) .. (3.3566, 28.0280) .. controls (3.5126, 27.8039) and (3.5941,
    27.5375) .. (3.6670, 27.2744) .. controls (3.7555, 26.9543) and (3.8412,
    26.6188) .. (4.0596, 26.3687) .. controls (4.1908, 26.2185) and (4.3653,
    26.1074) .. (4.5555, 26.0473) .. controls (4.7457, 25.9873) and (4.9510,
    25.9778) .. (5.1470, 26.0152) .. controls (5.5389, 26.0898) and (5.8972,
    26.3296) .. (6.0768, 26.6664) .. controls (6.2563, 27.0031) and (6.1887,
    27.0896) .. (6.0082, 27.2891);

  \draw[line, -latex] (2.3845,28.9534) node[above] {$c_1$} .. controls
    (1.4659, 28.9534) and (0.7212, 28.2088) .. (0.7212, 27.2902) .. controls
    (0.7213, 26.3716) and (1.4659, 25.6270) .. (2.3845, 25.6270) .. controls
    (2.8809, 25.6273) and (3.3512, 25.8493) .. (3.6670, 26.2324) .. controls
    (3.9827, 25.8493) and (4.4530, 25.6273) .. (4.9494, 25.6270) .. controls
    (5.8680, 25.6269) and (6.6127, 26.3716) .. (6.6127, 27.2902) .. controls
    (6.6127, 28.2088) and (5.8680, 28.9535) .. (4.9494, 28.9534) .. controls
    (4.4532, 28.9531) and (3.9831, 28.7313) .. (3.6674, 28.3485) .. controls
    (3.3515, 28.7315) and (2.8810, 28.9534) .. (2.3845, 28.9534);

\end{tikzpicture}
\end{center}
\caption{\label{Figure:SpineL41}Special spine of the lens space $L_{4, 1}$}
\end{figure}

The spine $P$ contains two 2-components, denoted by $c_1$ and $c_2$, and only one true vertex $v$. Let $c_1$ be a 2-component of length 2, and $c_2$ be a 2-component of length 4. There are only four colourings of $P$ by elements of the group $G$ (in this case orientations of 2-components and triple lines do not matter): $$(c_1, c_2) \mapsto (0, 0), (0, 2), (2, 1), (2, 3).$$

For the first two colourings the weights of the polyhedron $P$ are $\omega(0, 0, 0) = 1$ and $\omega(2, 0, 2) = 1$. For the last two colours the weights are $\omega(1, 2, 3) = t$ and $\omega(3, 2, 1) = t$. So $DW_{\omega}(M) = 2 + 2t$.
\end{example}

\bigskip
\noindent

\end{document}